\documentclass[onecolumn, a4paper, 12pt, conference]{ieeeconf}      % Use this line for a4 paper
\IEEEoverridecommandlockouts                              % This command is only needed if 
                                                          % you want to use the \thanks command
\overrideIEEEmargins                                      % Needed to meet printer requirements.

%In case you encounter the following error:
%Error 1010 The PDF file may be corrupt (unable to open PDF file) OR
%Error 1000 An error occurred while parsing a contents stream. Unable to analyze the PDF file.
%This is a known problem with pdfLaTeX conversion filter. The file cannot be opened with acrobat reader
%Please use one of the alternatives below to circumvent this error by uncommenting one or the other
%\pdfoobjcompresslevel=0
%\pdfminorversion=4

% See the \addtolength command later in the file to balance the column lengths
% on the last page of the document

%\let\proof\relax
%\let\endproof\relax

%%%%%%% PACKAGES %%%%%%%%
\usepackage[utf8]{inputenc}
\usepackage{graphics} % for pdf, bitmapped graphics files
\usepackage{amsmath} % assumes amsmath package installed
\usepackage{amssymb}  % assumes amsmath package installed
\usepackage{amsfonts}
\usepackage{comment}
\usepackage{enumitem}
\usepackage{cases,xspace,enumitem}

\usepackage{version}
%\includeversion{arxiv}
\includeversion{tac}
\excludeversion{arxiv}
%\excludeversion{tac}

\usepackage{tikz}
\usetikzlibrary{positioning}

% new nice boxes for comments
\usepackage[prependcaption,colorinlistoftodos]{todonotes}

%%%%%%%%%%%%%%%%%%%%%%%%%%%%%%%%%%%%%%%%%%%%%%%%%%
%              New Colors, Sep 23, 2021          %
%%%%%%%%%%%%%%%%%%%%%%%%%%%%%%%%%%%%%%%%%%%%%%%%%%

%% definitions from Goodnotes
%% translated using    https://www.w3schools.com/colors/colors_hexadecimal.asp
\definecolor{gnred}{RGB}{255,91,89}

\definecolor{gnred1}{RGB}{71,0,0} % 470000
\definecolor{gnred2}{RGB}{117,0,0} % 750000
\definecolor{gnred3}{RGB}{164,0,0} % a40000
\definecolor{gnred4}{RGB}{211,0,0} % d30000
\definecolor{gnred5}{RGB}{255,0,0} % FF0000
\definecolor{gnred6}{RGB}{255,42,34} % FF2a22
\definecolor{gnred7}{RGB}{255,91,89} % ff5b59 --- favorite

\definecolor{gnblue1}{RGB}{0,36,71}   % 002447
\definecolor{gnblue2}{RGB}{0,60,118}  % 003c76
\definecolor{gnblue3}{RGB}{0,85,164}
\definecolor{gnblue4}{RGB}{0,108,212}
\definecolor{gnblue4}{RGB}{0,108,212}
\definecolor{gnblue5}{RGB}{0,133,255}  % 0085ff
\definecolor{gnblue6}{RGB}{35,156,255} % 239cff
\definecolor{gnblue7}{RGB}{88,177,255} % 58b1ff

\definecolor{gnbrown1}{RGB}{71,27,0}  % 471b00
\definecolor{gnbrown2}{RGB}{117,45,0} % 752d00
\definecolor{gnbrown3}{RGB}{164,62,0} % a43e00
\definecolor{gnbrown4}{RGB}{211,80,0} % d35000
\definecolor{gnbrown5}{RGB}{255,97,0} % ff6100
\definecolor{gnbrown6}{RGB}{255,127,26} % ff7f1a
\definecolor{gnbrown7}{RGB}{255,155,86} % ff9b56

%%%% enumerate environment containing text and with un-interrupted numbering

%% \setcounter{saveenum}{\value{enumi}}
%% \setcounter{enumi}{\value{saveenum}}

%% Enumerate environment

%%% Remember: labels in enumerate    \begin{enumerate}[label=\textup{(A\arabic*)}]
%%% \usepackage[shortlabels]{enumitem}  ... remember: this does not work with Beamer!

%% enumitem  also allows [nosep]

%rename this to your own bibliography
% \onehalfspace   % 1.5 line spacing
% \setlength {\marginparwidth }{2cm}

% equation in line in enumerate
\newcommand\Item[1][]{%
  \ifx\relax#1\relax  \item \else \item[#1] \fi
  \abovedisplayskip=0pt\abovedisplayshortskip=0pt~\vspace*{-\baselineskip}}

\usepackage{hyperref}
\hypersetup{
    colorlinks=true,
    linkcolor=blue,
    filecolor=magenta,      
    urlcolor=cyan,
    pdftitle={Overleaf Example},
    pdfpagemode=FullScreen,
    }

\urlstyle{same}
%%%%%%%% NEW COMMAND %%%%%%%%
%%%% THEOREM AND AMBIENTES

\usepackage{amsthm}

\newtheorem{definition}{Definition}
\newtheorem{thm}{Theorem}
\newtheorem{lem}[thm]{Lemma}
\newtheorem{pro}[thm]{Proposition}
\newtheorem{cor}[thm]{Corollary}
\newtheorem{rem}[thm]{Remark}

\newtheorem{assumption}{Assumption}
\newcommand{\bd}{\begin{definition}} 
\newcommand{\ed}{\end{definition}} 
\newcommand{\bp}{\begin{pro}} 
\newcommand{\ep}{\end{pro}} 
\newcommand{\bt}{\begin{thm}} 
\newcommand{\et}{\end{thm}}
\newcommand{\blm}{\begin{lem}} 
\newcommand{\elm}{\end{lem}}
\newcommand{\bi}{\begin{itemize}} 
\newcommand{\ei}{\end{itemize}} 
\newcommand{\bds}{\begin{description}} 
\newcommand{\eds}{\end{description}} 
\newcommand{\beq}{\begin{equation}} 
\newcommand{\eeq}{\end{equation}} 
%%%% SETS
\let\eps\varepsilon

\newcommand{\R}{\mathbb{R}}
\newcommand{\K}{\mathcal{K}}
\renewcommand{\S}{\bold{S}}

\DeclareMathOperator{\Img}{\operatorname{Im}}
\newcommand{\spec}{\operatorname{spec}}
%%%% OPERATORS
\newcommand{\spn}{\operatorname{\mathsf{span}}}
\newcommand{\abs}[1]{\left|#1\right|}

\newcommand{\norm}[1]{\|#1\|}

\newcommand{\wlognorm}[3]{\mu_{#1,#2}(#3)}
\newcommand{\lognorm}[2]{\mu_{#1}(#2)}

\newcommand{\derp}[2]{\frac{\partial #1}{\partial #2}}

\newcommand{\trasp}[1]{#1^{\textsf{T}}}

\newcommand{\diag}[1]{[#1]}

\newcommand{\until}[1]{\{\,1,\dots, #1\,\}}

\newcommand{\subscr}[2]{#1_{\textup{#2}}}

\newcommand{\setdef}[2]{\{#1 \; | \; #2\}}

\newcommand{\Bigsetdef}[2]{\Big\{#1 \; \big| \; #2\Big\}}
\newcommand{\map}[3]{#1 \colon #2 \rightarrow #3}

\newcommand{\osLip}{\operatorname{\mathsf{osL}}}

\newcommand{\mpi}[1]{{#1}^{\dagger}}

%%%% SYMBOLS

\newcommand{\xstar}{x^{*}}
%% EQUIVALENCE FUMAROLA MILLER

\newcommand{\hop}{\subscr{x}{H}}
\newcommand{\fr}{\subscr{x}{F}}
\newcommand{\hopdot}{\subscr{\dot x}{H}}
\newcommand{\frdot}{\subscr{\dot x}{F}}
\newcommand{\uh}{\subscr{u}{H}}
\newcommand{\ufr}{\subscr{u}{F}}

\newcommand{\Q}{Q}
\newcommand{\ql}[1]{Q_{\textup{F},#1}}
\newcommand{\qr}[1]{Q_{\textup{H},#1}}

\newcommand{\HNN}{HNN\xspace}
\newcommand{\FNN}{FNN\xspace}

\newcommand{\fh}{\subscr{f}{H}}
\newcommand{\ffr}{\subscr{f}{F}}
\newcommand{\fsplit}{\theta}
\newcommand{\lm}{b}%\subscr{\lambda}{m}}
\newcommand{\lmax}{\alpha(W)}

\newcommand{\xperp}{x_{\perp}}
\newcommand{\xparal}{x_{\parallel}}

\newcommand{\PF}{\subscr{\mathcal{P}}{F}}
\newcommand{\PH}{\subscr{\mathcal{P}}{H}}
%%%% LCA NNs
\newcommand{\nperp}{n_{\perp}}
\newcommand{\nparal}{n_{\parallel}}
\newcommand{\Uperp}{U_{\perp}}
\newcommand{\Uparal}{U_{\parallel}}

%% ACTIVATION FUNCTIONS

\newcommand{\sat}[2]{\operatorname{sat}_{#1}({#2})}

\newcommand{\1}
{\mbox{\fontencoding{U}\fontfamily{bbold}\selectfont1}}
\newcommand{\0}{\mbox{\fontencoding{U}\fontfamily{bbold}\selectfont0}}

\usepackage{titlesec}

\titlespacing*{\section}
{0pt}{3ex plus 1ex minus .2ex}{2ex plus .2ex}
\titlespacing*{\subsection}
{0pt}{3ex plus 1ex minus .2ex}{2ex plus .2ex}
\titlespacing*{\subsubsection}
{0pt}{1.5ex plus 1ex minus .2ex}{1ex plus .2ex}
\titlespacing*{\paragraph}
{0pt}{1.5ex plus 1ex minus .2ex}{1ex plus .2ex}

%%%%%%%%%%%%%%%%%%%%%%%%%%%%
\title{Euclidean Contractivity of Neural Networks\\
with Symmetric Weights}

\author{Veronica Centorrino$^a$, Anand Gokhale$^b$, Alexander Davydov$^b$,\\
Giovanni Russo$^c$ and Francesco Bullo$^b$
  \thanks{This work was in part supported by AFOSR project FA9550-21-1-0203. The authors
    thank Dr.\ Leo Kozachkov for insightful comments.}%
\thanks{$^a$Veronica Centorrino is with Scuola Superiore Meridionale, University of Naples Federico II, Italy. {\tt\small veronica.centorrino@unina.it.}}
\thanks{$^b$Anand Gokhale, Alexander Davydov, and Francesco Bullo are with the Center for Control, Dynamical 
Systems, and Computation, UC Santa Barbara, Santa Barbara, CA 93106 USA. {\tt\small 
anand\_gokhale@ucsb.edu, davydov@ucsb.edu, bullo@ucsb.edu}.}
\thanks{$^c$Giovanni Russo is with the Department of Information and Electric Engineering and Applied Mathematics, University of Salerno, Italy. {\tt\small giovarusso@unisa.it.}}
}

\date{}
\begin{document}
\maketitle

\begin{abstract}
\normalsize
This paper investigates stability conditions of continuous-time Hopfield and firing-rate neural networks by leveraging contraction theory.
First, we present a number of useful general algebraic results on matrix polytopes and products of symmetric matrices. Then, we give sufficient conditions for strong and weak Euclidean contractivity, i.e.,  contractivity with respect to the $\ell_2$ norm, of both models with symmetric weights and (possibly) non-smooth activation functions. Our contraction analysis leads to contraction rates which are log-optimal in almost all symmetric synaptic matrices.
Finally, we use our results to propose a firing-rate neural network model to solve a quadratic optimization problem with box constraints.
\end{abstract}

\section{Introduction}
Continuous-time recurrent neural networks (RNNs) are dynamical models
widely studied in computational neuroscience and machine learning.  Recent
interest has focused on establishing the contractivity properties of RNNs.
Contracting dynamics are robustly stable, feature computationally friendly
methods for equilibrium computation, and enjoy many other properties.
Motivated by optimization~\cite{DWT-JJH:84,AB-TRP:93} and neuroscientific
applications~\cite{CJR-DHJ-RGB-BAO:08},~\cite[Chapter~17]{WG-WMK-RN-LP:14}, this paper focuses on symmetric
synaptic interactions.

While a comprehensive contractivity analysis with respect to $\ell_1$ and
$\ell_\infty$ norms was recently presented in~\cite{AD-AVP-FB:22q}, the
corresponding analysis with respect to weighted Euclidean norms is not
complete yet.  A recent breakthrough in this direction was obtained
by~\cite{LK-ME-JJES:22}; this work extends and complements these results (a
detailed comparison is offered below).

Two common models of RNNs are the \emph{firing-rate neural network} (\FNN)
and \emph{Hopfield neural network} (\HNN); the main difference being the
order by which the activation function acts.  Under mild assumptions, FNNs
are positive systems and, arguably, more biologically-plausible. HNNs are
relevant in optimization and machine learning~\cite{DWT-JJH:84,AB-TRP:93,
  CJR-DHJ-RGB-BAO:08,AY-JX-MR-ER:22}. For certain synaptic matrices and initial
conditions, \FNN and \HNN are known to be equivalent via an appropriate
change of coordinates and input transformation~\cite{KDM-FF:12}.
% Accordingly, the two models are used interchangeably in numerous scientific efforts. 
However, the understanding of this partial correspondence is not complete
and, as we will show below, their contractivity properties are not exactly
coincident.

\paragraph{Related literature}
RNNs naturally emerge when modelling neural
processes~\cite{WG-WMK-RN-LP:14}. Critical questions when studying RNNs are
related to finding conditions that guarantee stability and robustness of
the network.
For example, sufficient conditions for the stability of HNNs are given in~\cite{MF-AT:95} based on the use of Lyapunov diagonally stable matrices.
Stability and robustness can be simultaneously established using contraction theory.
%These properties can be simultaneously established using contraction theory.
Indeed, contracting systems exhibit highly ordered
transient and asymptotic behaviors that appear to be convenient in the
context of RNNs. For example: (i) initial conditions are exponentially
forgotten~\cite{WL-JJES:98}; (ii) time-invariant dynamics admits a unique
globally exponential stable equilibrium~\cite{WL-JJES:98}; (iii)
contraction ensures entrainment to periodic inputs~\cite{GR-MDB-EDS:10a}
and (iv) enjoy highly robust behavior, such as input-to-state
stability~\cite{SX-GR-RHM:21}. (v) Moreover, efficient numerical algorithms
can be devised for numerical integration and fixed point computation of
contracting systems~\cite{SJ-AD-AVP-FB:21f}.  Recently, non-Euclidean
contractivity of RNNs is studied in~\cite{AD-AVP-FB:22q} and
in~\cite{VC-FB-GR:22k}, where stability properties of \HNN and \FNN with
dynamic synapses undergoing Hebbian learning are proposed.  Euclidean
contractivity is studied in~\cite{LK-ML-JJES-EKM:20} to analyze the
stability of RNNs with dynamic synapses and in~\cite{LK-ME-JJES:22}, where
a number of contractivity conditions are proposed. Finally, the design of
norms minimizing the logarithmic norm is reviewed
in~\cite[Section~2.7]{FB:23-CTDS}.
\paragraph{Contributions:} our main results are a set of sufficient conditions characterizing strong and weak infinitesimal contractivity properties (see Section~\ref{sec:math_preliminaries} for the definitions) of FNNs and HNNs with symmetric weights and possibly non-smooth activation functions. We also establish a lower bound on the contraction rate and, remarkably, demonstrate that the bound is log-optimal in almost all symmetric weight matrices. One of the main benefits of our approach to the study of FNNs and HNNs is that, with just a single condition, it ensures global exponential convergence, along with all the other useful properties of contracting systems. The main results leverage a number of general algebraic results, which are interesting {\em per se} and are also a contribution of this paper. With these algebraic results, we: (i) determine a weighted $\ell_2$ norm for matrix polytopes which is log-optimal for almost all synaptic matrices; (ii) give a lower bound on the spectral abscissa of matrix polytopes; (iii) provide optimal and log-optimal norms for the product of symmetric matrices. Finally, we leverage our sufficient conditions for contractivity to propose a \FNN solving certain quadratic optimization problems with box constraints. 

Our results for strong infinitesimal contractivity of the \FNN and \HNN models with symmetric weights are based on and generalize~\cite[Theorem 2]{LK-ME-JJES:22}.
Specifically, (i) we provide the explicit expression of the matrix weights for which the models are contracting. The matrices we find are different for the two models, highlighting the importance of choosing the appropriate model based on the properties being studied;
(ii) we address the weak contractivity case, i.e., when the contraction rate is $0$, making it applicable for, e.g., systems that enjoy conservation or invariance properties;
(iii) we handle weakly increasing and (iv) locally Lipschitz activation functions, allowing us to consider common activation functions such as the rectified linear unit (ReLU) and soft thresholding functions.

\section{Mathematical Preliminaries}\label{sec:math_preliminaries}
We denote by $\map{(\cdot)_+}{\R}{\R_{\geq0}}$ the function $(z)_+ = z$ if $z >0$, $(z)_+ = 0$ if $z \leq 0$.
Given $x \in \R^n$, we define $\diag{x} \in \R^{n \times n}$ to be the diagonal matrix with diagonal entries equal to $x$. Vector inequalities of the form $x \leq (\geq)~y$ are entrywise. We let $\1_n$, $\0_n \in \R^n$ be the all-ones and all-zeros vectors, respectively, $I_n$ be the $n \times n$ identity matrix, and $\S^n$ be the set of real symmetric $n\times n$ matrices.
For $A\in \R^{n \times n}$, let $\spec(A)$,  $\rho(A) := \max \setdef{\abs{\lambda}}{\lambda \in \spec(A)}$ and $\alpha(A) := \max \setdef{\Re(\lambda)}{\lambda \in \spec(A)}$ denote the spectrum, {spectral radius} and the {spectral abscissa} of $A$, respectively; here $\Re(\lambda)$ denotes the real part of $\lambda$.
For $A\in\S^n$, let $\subscr{\lambda}{min}(A)$ and $\subscr{\lambda}{max}(A)$ denote its minimum and maximum eigenvalue, respectively.
Given $A, B \in \S^n$, we write $A \preceq B$ (resp. $A \prec B$) if $B-A$ is positive semidefinite (resp. definite).
The Moore–Penrose inverse of $A\in \R^{n\times n}$ is the unique matrix $\mpi{A} \in \R^{n\times n}$ such that $A\mpi{A}A = A$, $\mpi{A}A\mpi{A} = \mpi{A}$, with $A\mpi{A}$, $\mpi{A}A \in \S^n$.
Finally, whenever it is clear from the context, we omit to specify the dependence of functions on time $t$.
\subsection{Norms and induced norms}
Let $\| \cdot \|$ denote both a norm on $\R^n$ and its corresponding induced matrix norm on $\R^{n \times n}$. Given $A \in \R^{n \times n}$ the \emph{logarithmic norm} (log-norm) induced by $\| \cdot \|$ is 
\[
\mu(A) := \lim_{h\to 0^{+}} \frac{\norm{I_n{+}h A} -1}{h}.
\]
Specifically, the Euclidean vector norm, matrix norm, and log-norm are, respectively:
${\displaystyle \norm{x}_2 = \sqrt{x^\top x}}$, $\allowbreak \displaystyle {\norm{A}_2 = \sqrt{\subscr{\lambda}{max}(A^\top A)}}$, and $\displaystyle \lognorm{2}{A} = \frac{1}{2}\subscr{\lambda}{max}\left(A{+}A^\top\right)$.

For an $\ell_p$ norm, $p \in [1,\infty]$, and for an invertible matrix $\Q \in \R^{n\times n}$, the $\Q$-weighted $\ell_p$ norm is defined as $\norm{x}_{p,\Q} := \norm{\Q x}_p$. The corresponding log-norm is $\wlognorm{p}{\Q}{A} =\lognorm{p}{\Q A \Q^{-1}}$.
Specifically, the weighted Euclidean vector norm, matrix norm, and log-norm are, respectively: $\displaystyle  \norm{x}_{2,Q} = \norm{Qx}_2$, $\displaystyle  \norm{A}_{2,Q^{1/2}} =  \sqrt{\subscr{\lambda}{max}(Q^{-1}A^\top Q A)}$, and $\displaystyle {\wlognorm{2}{Q^{1/2}}{A} = \frac{1}{2}\subscr{\lambda}{max}\left(QAQ^{-1}{+}A^\top\right)}$.

For two invertible matrices $\Q_1$, $\Q_2 \in \R^{n \times n}$, it holds
\begin{equation} \label{prop:silly-weightedmu}
\wlognorm{p}{\Q_1\Q_2}{A}=
\wlognorm{p}{\Q_1}{\Q_2A\Q_2^{-1}}.
\end{equation}

Given $\map{f}{\R_{\geq 0} \times C}{\R^n}$, with $C \subseteq \R^n$ open and connected, we denote by $\osLip(f_t)$ the \emph{one-sided Lipschitz constant} of $f_t:=f(t,\cdot)$.
For continuously differentiable $f_t$ and convex set $C$ it holds $$\osLip(f_t) = \sup_{x\in C}\lognorm{}{Df(t,x)},$$ where $Df(t,x) := \partial f(t,x)/\partial x$ is the Jacobian of $f$ with respect to $x$. We write $\osLip_{p,Q}(f_t)$ to specify that the one-sided Lipschitz constant is computed with respect to a $\Q$-weighted $\ell_p$ norm.
Specifically, for the weighted Euclidean norm we have:
$$\osLip_{2,Q^{1/2}}(f_t) = \sup_{x,y\in C,x\neq y} \frac{(x-y)^\top Q(f(x) -f(y))}{\|x-y\|_{2,Q^{1/2}}^2}.$$

We refer to~\cite{FB:23-CTDS} for a recent review of those tools.
\subsection{Contraction theory for dynamical systems} We start with the following
\bd \label{def:contracting_system}
Given a norm, a function $\map{f}{\R_{\geq 0} \times C}{\R^n}$, with $C \subseteq \R^n$ $f$-invariant, open and convex, and a constant $c >0$ ($c = 0)$ referred as \emph{contraction rate}, $f$ is strongly (weakly) infinitesimally contracting on $C$ if
\[
\osLip(f_t) \leq -c,  \textup{ for all } t\in \R_{\geq 0},
\]
or, equivalently for differentiable vector fields, if
\beq\label{cond:contraction_log_norm}
\mu(Df(t,x)) \leq -c,  \textup{ for all } x \in C  \textup{ and } t\in \R_{\geq0}.
\eeq
\ed
One of the main benefits of contraction theory is that, with just a single condition, it ensures global
exponential convergence, along with other useful properties, as highlighted in the introduction Section.

The next result~\cite[Theorem 16]{AD-AVP-FB:22q} allows using condition~\eqref{cond:contraction_log_norm} for locally Lipschitz function, for which, by Rademacher’s theorem, $Df(t,x)$ exists almost everywhere (a.e.) in $C$.
\bt\label{th:equivalence_loc_lip_func}
Consider a norm, a function $\map{f}{\R_{\geq 0}\times C}{\R^n}$ locally Lipschitz on $C \subset \R^n$ open and convex set. Then for every $c \in \R$ the following statements are equivalent:
\begin{enumerate}
\item $\osLip(f_t) \leq c$,\ for all $t\in \R_{\geq 0}$,
\item $\lognorm{ }{Df(t,x)} \leq c$,\ for a.e. $x \in C$ and $t\in \R_{\geq0}$.
\end{enumerate}
\et
\subsection{Hopfield and firing-rate continuous-time neural networks}
We are interested in the following continuous-time \FNN and \HNN models defined, respectively, as:
\begin{align}
   \frdot &= -\fr{+}\Phi(W\fr+\ufr) := \ffr(\fr,\ufr) \label{eq:firing_rate_nn},\\
   \hopdot &={-}\hop{+}W \Phi(\hop){+}\uh : = \fh(\hop,\uh), \label{eq:hopfield_nn}
\end{align}
where: $\fr$, $\hop \in \R^n$ are neural activation vectors, $\map{\Phi}{\R^n}{\R^n}$ is a nonlinear and diagonal activation function, i.e., for $x \in \R^n$, $(\Phi(x))_i = \phi(x_i)$, where $\map{\phi}{\R}{\R}$. $W \in \R^{n \times n}$ is the synaptic matrix, with $W_{ij} \in \R$ being the synaptic weight from neuron $j$ to neuron $i$. Finally, $\ufr$, $\uh \in \R^n$ are the external stimuli in the \FNN and \HNN, respectively.  The models~\eqref{eq:firing_rate_nn} and~\eqref{eq:hopfield_nn} assume homogeneous dissipation rates; we leave the heterogeneous case to future work.
\begin{rem}
When the activation function is non-negative the positive orthant is forward-invariant for $\ffr$ in~\eqref{eq:firing_rate_nn} and $\fr$ is interpreted as a firing-rate.  Instead, in~\eqref{eq:hopfield_nn} $\hop$ is sign indefinite and is interpreted as a membrane potential.
\end{rem}
\section{Main Results}\label{sec:main_results}
This section presents the main results of the paper.
Namely, we study Euclidean contractivity properties of continuous-time RNNs with symmetric weights.

First, we give algebraic results on weighted $\ell_2$ norms of certain matrix polytopes.
Then, we use those results to give sufficient conditions for the strong infinitesimal contractivity of the \FNN and the \HNN with symmetric weights with respect to weighted Euclidean norms.

\begin{assumption}[Symmetric synaptic weights]
  \label{ass:symmetric_synaptic_matrix}
  The synaptic matrix $W\in \R^{n\times n}$ is symmetric. 
\end{assumption}

Under Assumption~\ref{ass:symmetric_synaptic_matrix}, the eigenvalues of
$W$ are real, $\alpha(W) = \subscr{\lambda}{max}(W)$ and $W \preceq \lmax
I_n$. Moreover, $W$ can be decomposed as \beq \label{eq:decomposition_W}
W=U\Lambda U^\top, \eeq where $U\in \R^{n\times n}$ is the orthogonal
matrix whose columns are the eigenvectors of $W$, and $\Lambda = [\lambda]
\in \R^{n\times n}$ is diagonal with $\lambda \in \R^n$ being the vector of
the eigenvalues of $W$.

Given $\lm > 0$, we define ${\map{\fsplit_{\lm}}{]{-}\infty,\lm]}{[2\lm,+\infty[}}$ by
\begin{equation}\label{eq:theta}
\fsplit_{\lm}(z):=2\lm\big(1+\sqrt{1-z/\lm}\big), \quad \forall z \in {]{-}\infty,\lm]}.
\end{equation}
We illustrate $\fsplit_{\lm}(\cdot)$ in Figure~\ref{fig:plot_theta_b}. For our derivations, it is useful to introduce the shorthand notation ${\fsplit_{\lm}(\Lambda):=\diag{(\fsplit_{\lm}(\lambda_1),\dots,\fsplit_{\lm}(\lambda_n))}}$. Also, we introduce $\ql{\lm}\in \R^{n\times n}$
\begin{align}
\ql{\lm} & := U \fsplit_{\lm}(\Lambda) U^\top\succ 0, \label{eq:ql}
\end{align}
and,  when $W$ is invertible, $\qr{\lm} \in \R^{n\times n}$ is defined as
\begin{align}
\qr{\lm} & := \ql{\lm} W^{-1} = U \fsplit_{\lm}(\Lambda)\Lambda^{-1} U^\top
\succ 0.\label{eq:qr}
\end{align}
\begin{figure}[!ht]
\scalebox{0.55}{\input{plot_theta_b.pgf}}
\centering
\caption{Plot of the function $\fsplit_{\lm}(\cdot)$ with $\lm = 5$.}
\label{fig:plot_theta_b}
\end{figure}
\begin{rem}
The matrix $\qr{\lm}$ defined in~\eqref{eq:qr} can be written as $\qr{\lm}= U g_{\lm}(\Lambda)\trasp{U}$, where we use the notation $g_{\lm}(\Lambda) := \diag{g_{\lm}(\lambda_1), \dots, g_{\lm}(\lambda_n)}$, with $g_{\lm}(\cdot)$ defined by
\beq
g_{\lm}(z) := 2b\frac{1+\sqrt{1-z/\lm}}{z}, \quad \forall z \in {]-\infty,b] \setminus\{0\}}.
\eeq
\end{rem}
\subsection{Results on the
Euclidean log-norm of matrix polytopes}
First, we give the following definition for polytopes.
\bd[Log-optimal and log-$\eps$-optimal norms for matrix polytopes]
\label{def:log-opt-polytopes}
Given $A_1, \dots, A_m \in \R^{n \times n}$, consider the polytope $$\mathcal{P} = \Bigsetdef{\sum_{j = 1}^{m} \beta_j A_j}{\beta_j \geq 0, \sum_{j = 1}^{m} \beta_j = 1}$$ and a scalar $\eps >0$. We say that the norm $\norm{\cdot}$ is
\begin{enumerate}
\item \emph{logarithmically optimal (log-optimal) for $\mathcal{P}$} if
%$$\displaystyle \max_{j = 1,\dots,m} \lognorm{ }{A_j} = \max_{j = 1,\dots,m} \alpha(A_j),$$
$$\max_{A\in\mathcal{P}} \alpha(A) = \max_{j \in \{1,\dots,m\}} \lognorm{ }{A_j};$$
\item \emph{logarithmically $\eps$-optimal (log-$\eps$-optimal) for $\mathcal{P}$} if $$\displaystyle \max_{A\in\mathcal{P}} \alpha(A) \leq \max_{j \in \{1,\dots,m\}} \lognorm{ }{A_j} \leq \max_{A\in\mathcal{P}} \alpha(A) + \eps.$$
\end{enumerate}
\ed
We are specifically interested in the matrix polytopes
defined as ${\PF := \setdef{[d]W}{d\in[0,1]^n}}$ and ${\PH := \setdef{W[d]}{d\in[0,1]^n}}$.
Namely, in Theorem \ref{thm:prop_lognorm} we give algebraic results on the Euclidean log-norm of matrices in $\PF$ and $\PH$ (the proof is in Section~\ref{sec:additional_results}, together with a number of instrumental results).
\begin{rem}
It is always possible to rewrite $\PF$ and $\PH$ in the form of Definition~\ref{def:log-opt-polytopes}. In fact, let %$A_1, \dots, A_{2^n} \in \R^{n \times n}$ be the $2^n$ vertices defined by $[d]W$, where $d \in \{0,1\}^n$ is the binary vector with entries either $0$ or $1$. Then the set $\setdef{\sum_{j=1}^{2^n} \beta_j A_j}{\beta_j \geq 0, \sum_{j=1}^{2^n} \beta_j = 1}$ can be shown to be exactly the set $\PF$.
$A_1, \dots, A_{2^n} \in \R^{n \times n}$ be the $2^n$ vertices defined by $A_j = [v_j]W$ where $v_j \in \{0,1\}^n$ is the binary vector with entries either $0$ or $1$ (note that there are $2^n$ such binary vectors).
Then the set $\setdef{\sum_{j=1}^{2^n} \beta_j A_j}{\beta_j \geq 0, \sum_{j=1}^{2^n} \beta_j = 1}$ is exactly the set $\mathcal{P}_F := \setdef{[d]W}{d\in[0,1]^n}$.
To prove this, note that the vertices of the convex set $[0,1]^n$ are the $2^n$ vectors $v_j$.
Therefore, given $d\in[0,1]^n$ there exist $\beta_j \geq 0$, $j =1,\dots, 2^n$, with $\sum_{j=1}^{2^n} \beta_j = 1$ such that $[d]= \sum_{j=1}^{2^n} \beta_j [v_j]$.
Thus, 
\begin{align*}
\mathcal{P}_F &:= \setdef{[d]W}{d\in[0,1]^n} = \Bigsetdef{\sum_{j=1}^{2^n} \beta_j [v_j]W}{\beta_j \geq 0, \sum_{j=1}^{2^n} \beta_j = 1, v_j \in \{0,1\}^n}\\
&= \Bigsetdef{\sum_{j=1}^{2^n} \beta_j A_j}{\beta_j \geq 0, \sum_{j=1}^{2^n} \beta_j = 1}.
\end{align*}
The same reasoning holds for $\PH$.
\end{rem}
\bt[Euclidean log-norm of matrix polytopes]
%multiplicatively-scaled matrices]
\label{thm:prop_lognorm} Given a symmetric synaptic matrix $W$ (Assumption~\ref{ass:symmetric_synaptic_matrix}), the following statements holds:
\begin{enumerate}
\item \label{fact:nnell2:lognorm_alpha(W)>0}
if $\lmax>0$, then 
$\norm{\cdot}_{2,\ql{\lmax}}$, with $\ql{\lmax} \in \R^{n \times n}$ defined in~\eqref{eq:ql}, is log-optimal for $\PF$, i.e.,
%and $$\displaystyle \max_{d\in[0,1]^n}\alpha([d]W) = \lmax.$$
%\item \label{fact:nnell2:lognorm_alpha(W)>0}
\[
\displaystyle \max_{d\in[0,1]^n} \wlognorm{2}{\ql{\lmax}}{[d]W} = \max_{d\in[0,1]^n}\alpha([d]W) = \lmax.
\]

In addition, if $W$ is invertible, then
$\norm{\cdot}_{2,\qr{\lmax}}$, with $\qr{\lmax} \in \R^{n \times n}$ defined in~\eqref{eq:qr}, is log-optimal for $\PH$, i.e.,
%and $$\displaystyle \max_{d\in[0,1]^n}\alpha(W[d]) = \lmax;$$
\[
\displaystyle \max_{d\in[0,1]^n} \wlognorm{2}{\qr{\lmax}}{W[d]} = \max_{d\in[0,1]^n}\alpha(W[d]) = \lmax;
\]
\item \label{fact:nnell2:lognorm_alpha(W)=0}
if $\lmax = 0$, then for each $\eps>0$ the norm $\norm{\cdot}_{2,\ql{\eps}}$, with $\ql{\eps} \in \R^{n \times n}$ defined in~\eqref{eq:ql}, is log $\eps$-optimal for $\PF$, i.e.,
%and $$\displaystyle \max_{d\in[0,1]^n}\alpha([d]W) = 0;$$
\[
\displaystyle \max_{d\in[0,1]^n} \wlognorm{2}{\ql{\eps}}{[d]W} \leq \max_{d\in[0,1]^n} \alpha([d]W) + \eps = \eps;
\]
\item \label{fact:nnell2:lognorm_alpha(W)<0}
if $\lmax < 0$, then $\norm{\cdot}_{2,(-W)^{1/2}}$ is log-optimal for $\PF$ and $\PH$, i.e.,
% and $$\displaystyle \max_{d\in[0,1]^n} \alpha([d]W) = \max_{d\in[0,1]^n} \alpha(W[d]) = 0.$$
\begin{align*}
\displaystyle \max_{d\in[0,1]^n} \wlognorm{2}{(-W)^{1/2}}{[d]W} &= \max_{d\in[0,1]^n} \alpha([d]W) = 0,\\
\displaystyle \max_{d\in[0,1]^n} \wlognorm{2}{(-W)^{1/2}}{W[d]} &= \max_{d\in[0,1]^n} \alpha(W[d]) = 0.
\end{align*}
\end{enumerate}
\et
\begin{rem}
Theorem~\ref{thm:prop_lognorm} applies to polytopes of the form $aI_n{+}[d]W$ and of the form $aI_n{+}W[d]$, for all $a \in \R$. This follows from the log-norm translation property, i.e., for all $A \in \R^{n\times n}$ $\mu(A+a I_n) = \mu(A)+a$.
\end{rem}

\subsection{Contractivity of recurrent neural networks}
Next, we consider the neural network dynamics for the \FNN in  \eqref{eq:firing_rate_nn} and for the \HNN in \eqref{eq:hopfield_nn}.

\begin{assumption}[Slope-restricted activation function]
\label{ass:act_func_slope_restricted} 
The activation function $\map{\phi}{\R}{\R}$ is Lipschitz and slope restricted in $[0,1]$, i.e., 
$$\displaystyle 0 \leq \frac{\phi(x) - \phi(y)}{x-y} \leq 1, \text{ for all } x,y \in\R, x\neq y.$$
\end{assumption}
Assumption~\ref{ass:act_func_slope_restricted} ensures that $\phi'(x) \in[0,1]$ for almost all $x \in \R$.
Many common activation functions including ReLU, and sigmoid, satisfy Assumption~\ref{ass:act_func_slope_restricted}, possibly after rescaling.
In fact, Assumption~\ref{ass:act_func_slope_restricted} can be relaxed for larger classes of coupling by restricting the slope to $[0, \bar d]$, where $\bar d>0$. By defining $[d] := D\Phi/{\bar d}$ and $W := \bar d W$ our following results still hold for this general case, with $\alpha(W)$ replaced by $\alpha(\bar d \cdot W) = {\bar d} \cdot \alpha(W)$.
We assume $\bar d = 1$ to simplify the notation.
\subsubsection{Contractivity of firing rate neural networks}
We now provide an upper bound on the $\ell_2$ one-sided Lipschitz constant and sufficient conditions for the Euclidean contractivity of FNNs with symmetric weights.
\begin{thm}[Euclidean one-sided Lipschitz constant of the \FNN%with symmetric synapses
]\label{thm:ell2_osLip_fr}
Consider the \FNN~\eqref{eq:firing_rate_nn} satisfying Assumptions~\ref{ass:symmetric_synaptic_matrix},~\ref{ass:act_func_slope_restricted}: 
\begin{enumerate}
\item \label{fact:nnell2:osLFR_alpha(W)>0}
if $\lmax >0$, then $$\displaystyle \osLip_{2,\ql{\lmax}}(\ffr) \leq -1{+}\lmax,$$ with $\ql{\lmax}\in\R^{n\times n}$ defined in~\eqref{eq:ql};
\item \label{fact:nnell2:osLFR_alpha(W)=0}
if $\lmax = 0$, then $$\displaystyle \osLip_{2,\ql\eps}(\ffr) \leq -1{+}\eps,$$ with ${\ql\eps\in\R^{n\times n}}$ defined in~\eqref{eq:ql};
\item \label{fact:nnell2:osLFR_alpha(W)<0}
if $\lmax < 0$, then $$\displaystyle \osLip_{2,(-W)^{1/2}}(\ffr) \leq -1.$$
\end{enumerate}
\end{thm}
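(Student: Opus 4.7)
The plan is to reduce the three cases to the three corresponding cases of Theorem~\ref{thm:prop_lognorm} via a Jacobian computation and the log-norm translation property. By Assumption~\ref{ass:act_func_slope_restricted}, $\phi$ is Lipschitz, hence so is $\ffr$; by Rademacher's theorem, $\ffr$ is differentiable almost everywhere, and wherever it exists the Jacobian reads
\[
D\ffr(\fr,\ufr) \;=\; -I_n + D\Phi(W\fr+\ufr)\,W \;=\; -I_n + [d(\fr,\ufr)]\,W,
\]
where $d(\fr,\ufr) \in \R^n$ has entries $d(\fr,\ufr)_i = \phi'((W\fr+\ufr)_i) \in [0,1]$. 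In particular, $D\ffr(\fr,\ufr) \in -I_n + \PF$ for a.e.\ $(\fr,\ufr)$.

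Next, I would apply the log-norm translation property $\mu(A+aI_n) = \mu(A)+a$ to any weighted Euclidean log-norm, obtaining, for a.e.\ $(\fr,\ufr)$ and any positive definite $Q$,
\[
\wlognorm{2}{Q^{1/2}}{D\ffr(\fr,\ufr)} \;=\; -1 + \wlognorm{2}{Q^{1/2}}{[d(\fr,\ufr)]\,W}.
\]
Now I specialize the weight matrix $Q$ according to the three cases. For case~\ref{fact:nnell2:osLFR_alpha(W)>0}, choose $Q = \ql{\lmax}$ and invoke statement~\ref{fact:nnell2:lognorm_alpha(W)>0} of Theorem~\ref{thm:prop_lognorm} to get $\wlognorm{2}{\ql{\lmax}^{1/2}}{[d]W} \leq \lmax$ for every $d \in [0,1]^n$, hence $\wlognorm{2}{\ql{\lmax}^{1/2}}{D\ffr} \leq -1 + \lmax$ a.e. For case~\ref{fact:nnell2:osLFR_alpha(W)=0}, choose $Q = \ql{\eps}$ and use statement~\ref{fact:nnell2:lognorm_alpha(W)=0}, yielding $\wlognorm{2}{\ql{\eps}^{1/2}}{D\ffr} \leq -1 + \eps$ a.e. For case~\ref{fact:nnell2:osLFR_alpha(W)<0}, choose $Q = (-W)$ (noting $-W \succ 0$ since $\lmax < 0$) and use statement~\ref{fact:nnell2:lognorm_alpha(W)<0} to obtain $\wlognorm{2}{(-W)^{1/2}}{D\ffr} \leq -1$ a.e.

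Finally, since $\ffr$ is locally Lipschitz and each of the above bounds holds for almost every $(\fr,\ufr)$, I apply Theorem~\ref{th:equivalence_loc_lip_func} to lift each a.e.\ log-norm bound into the corresponding bound on the one-sided Lipschitz constant $\osLip_{2,Q^{1/2}}(\ffr)$ in the respective weighted Euclidean norm.

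There is no real obstacle: the entire argument is a one-line chain of (i) recognizing the Jacobian as an element of the polytope $-I_n+\PF$, (ii) the translation identity, and (iii) quoting Theorems~\ref{thm:prop_lognorm} and~\ref{th:equivalence_loc_lip_func}. The only mild subtlety is ensuring that the a.e.\ differentiability of $\Phi$ is enough to invoke Theorem~\ref{th:equivalence_loc_lip_func}, which is precisely what that theorem was stated for.
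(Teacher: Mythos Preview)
Your argument is correct and essentially identical to the paper's: recognize $D\ffr \in -I_n+\PF$ a.e., apply the log-norm translation property, quote the corresponding part of Theorem~\ref{thm:prop_lognorm}, and lift the a.e.\ Jacobian bound to the one-sided Lipschitz constant via Theorem~\ref{th:equivalence_loc_lip_func}. The only slip is notational: in the paper's convention $\norm{\cdot}_{2,R}$ means $x\mapsto\norm{Rx}_2$, so the weight appearing in both the theorem statement and in Theorem~\ref{thm:prop_lognorm}\ref{fact:nnell2:lognorm_alpha(W)>0} is $\ql{\lmax}$ itself (not $\ql{\lmax}^{1/2}$); you should therefore write $\wlognorm{2}{\ql{\lmax}}{\cdot}$ directly rather than introduce $Q^{1/2}$ and set $Q=\ql{\lmax}$.
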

\begin{proof}
Regarding part~\ref{fact:nnell2:osLFR_alpha(W)>0} note that for almost all $x \in \R^n$ we have
\begin{align*}
\wlognorm{2}{\ql{\lmax}}{D{\ffr}(x)} &= \wlognorm{2}{\ql{\lmax}}{-I_n+ D\Phi(Wx+u)W}\\
&\leq \max_{d\in[0,1]^n} \wlognorm{2}{\ql{\lmax}}{-I_n+[d]W}\\
&= -1{+}\lmax,
\end{align*}
where the last equality follows by the log-norm translation property and part~\ref{fact:nnell2:lognorm_alpha(W)>0} in Theorem~\ref{thm:prop_lognorm}. The proof follows by applying Theorem~\ref{th:equivalence_loc_lip_func}.
Parts~\ref{fact:nnell2:osLFR_alpha(W)=0} and~\ref{fact:nnell2:osLFR_alpha(W)<0} can be proved similarly, using parts~\ref{fact:nnell2:lognorm_alpha(W)=0} and~\ref{fact:nnell2:lognorm_alpha(W)<0} in Theorem~\ref{thm:prop_lognorm}.%, respectively.
\end{proof}
\begin{rem}
\label{rem:tight_ine_fnn}
Under further assumptions on the synaptic matrix and the activation function, some inequalities in Theorem~\ref{thm:ell2_osLip_fr} are tight -- see Appendix~\ref{apx:tight_ine_fnn}.
\end{rem}
The next result follows from Theorem \ref{thm:ell2_osLip_fr}.
\begin{cor}[Euclidean contractivity of the \FNN ]\label{cor:ell2_contractivity_fr}
Under the same assumptions and notations as in Theorem~\ref{thm:ell2_osLip_fr},
\begin{enumerate}
\item if $\lmax = 1$, then the \FNN is weakly infinitesimally contracting with respect to $\norm{\cdot}_{2,\ql{\lmax}}$;
\item \label{eq:contractivy_fr_0_1}
if ${0 < \lmax < 1}$, then the \FNN is strongly infinitesimally contracting with rate ${1-\lmax>0}$ with respect to ${\norm{\cdot}_{2,\ql{\lmax}}}$;
\item if $\lmax = 0$, then for any $0<\eps < 1$ the \FNN is strongly infinitesimally contracting with rate $1-\eps>0$ with respect to $\norm{\cdot}_{2,\ql\eps}$;
\item if $\lmax < 0$, then the \FNN is strongly infinitesimally contracting with rate $1$ with respect to ${\norm{\cdot}_{2,(-W)^{1/2}}}$.
\end{enumerate}
\end{cor}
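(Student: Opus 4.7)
The plan is to derive each of the four cases as an immediate consequence of the corresponding part of Theorem~\ref{thm:ell2_osLip_fr}, combined with Definition~\ref{def:contracting_system}. The key observation is that in each case the theorem yields a bound of the form $\osLip_{2,Q}(\ffr) \le -c$ in a specific weighted Euclidean norm, and Definition~\ref{def:contracting_system} then certifies strong (or, if $c = 0$, weak) infinitesimal contractivity with rate $c$ in that same norm.

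Concretely, I would proceed case-by-case. For $\lmax = 1$, applying part~\ref{fact:nnell2:osLFR_alpha(W)>0} of Theorem~\ref{thm:ell2_osLip_fr} gives $\osLip_{2,\ql{\lmax}}(\ffr) \le -1 + \lmax = 0$, which by Definition~\ref{def:contracting_system} is exactly weak infinitesimal contractivity with respect to $\norm{\cdot}_{2,\ql{\lmax}}$. For $0 < \lmax < 1$, the same part of Theorem~\ref{thm:ell2_osLip_fr} yields $\osLip_{2,\ql{\lmax}}(\ffr) \le -(1-\lmax)$ with $1-\lmax > 0$, hence strong contractivity with rate $1-\lmax$. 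For $\lmax = 0$, fixing any $\eps \in (0,1)$ and invoking part~\ref{fact:nnell2:osLFR_alpha(W)=0} of Theorem~\ref{thm:ell2_osLip_fr} gives $\osLip_{2,\ql{\eps}}(\ffr) \le -(1-\eps) < 0$, so strong contractivity with rate $1-\eps$ holds in $\norm{\cdot}_{2,\ql{\eps}}$. For $\lmax < 0$, part~\ref{fact:nnell2:osLFR_alpha(W)<0} of Theorem~\ref{thm:ell2_osLip_fr} directly provides $\osLip_{2,(-W)^{1/2}}(\ffr) \le -1$, establishing strong contractivity with rate $1$ in $\norm{\cdot}_{2,(-W)^{1/2}}$.

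There is no substantive obstacle here: all the analytical work—handling the non-smoothness of $\Phi$ via Theorem~\ref{th:equivalence_loc_lip_func}, constructing the weight matrices $\ql{\lm}$, and establishing the log-optimality of the resulting norms on the polytope $\PF$—has already been carried out in Theorems~\ref{thm:prop_lognorm} and~\ref{thm:ell2_osLip_fr}. The proof of the corollary therefore reduces to pattern-matching the bounds from Theorem~\ref{thm:ell2_osLip_fr} against the two clauses $c > 0$ and $c = 0$ in Definition~\ref{def:contracting_system}, and noting that the open interval $0 < \lmax < 1$ and the limiting case $\lmax = 1$ split naturally according to whether the bound is strictly negative or exactly zero. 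The only thing to be careful about is to state the correct weighting matrix and contraction rate in each sub-item.
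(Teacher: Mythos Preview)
Your proposal is correct and matches the paper's approach exactly: the paper introduces the corollary with ``The next result follows from Theorem~\ref{thm:ell2_osLip_fr}'' and provides no further proof, so your case-by-case reading of the one-sided Lipschitz bounds against Definition~\ref{def:contracting_system} is precisely what is intended.
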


\subsubsection{Contractivity of Hopfield neural networks}
We first provide an upper bound on the Euclidean one-sided Lipschitz constant and sufficient conditions for the $\ell_2$ contractivity of HNNs with non-singular symmetric synaptic matrix. Then, we give sufficient conditions for the $\ell_2$ contractivity with singular symmetric synapses. This latter result is proven in Section~\ref{sec:additional_results}: differently from our analysis on FNNs, it requires a distinct mathematical approach.
\begin{thm}[Euclidean one-sided Lipschitz constant of the \HNN with non-singular symmetric weights]\label{thm:ell2_osLip_alpha(W)_hop}
Consider the \HNN \eqref{eq:hopfield_nn} satisfying Assumptions~\ref{ass:symmetric_synaptic_matrix},~\ref{ass:act_func_slope_restricted} with non-singular weight matrix $W$,
\begin{enumerate}
\item \label{fact:nnell2:osLH_alpha(W)>0}
if $\lmax >0$, then $$\displaystyle \osLip_{2,\qr{\lmax}}(\fh) \leq -1{+}\lmax,$$ with $\qr{\lmax}\in\R^{n\times n}$ defined in~\eqref{eq:qr};
\item \label{fact:nnell2:osLH_alpha(W)<0}
if $\lmax < 0$, then
$$\displaystyle \osLip_{2,(-W)^{1/2}}(\fh) \leq -1.$$
\end{enumerate}
\end{thm}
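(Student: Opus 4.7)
My plan is to follow the same template as the proof of Theorem~\ref{thm:ell2_osLip_fr} for the \FNN, replacing the polytope $\PF$ with $\PH$ and invoking the analogous parts of Theorem~\ref{thm:prop_lognorm}. Since $\phi$ is Lipschitz, $\fh$ is locally Lipschitz on $\R^n$, so by Rademacher's theorem its Jacobian
\[
D\fh(x) = -I_n + W D\Phi(x)
\]
exists at almost every $x \in \R^n$. Assumption~\ref{ass:act_func_slope_restricted} gives $D\Phi(x) = [d(x)]$ for some $d(x)\in[0,1]^n$ a.e., and therefore $W D\Phi(x) \in \PH$ almost everywhere; note that, in contrast with the \FNN Jacobian $D\Phi(\cdot)W \in \PF$, here $W$ appears on the \emph{left} of $D\Phi$, which is precisely what makes $\PH$ the relevant polytope.

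For part~\ref{fact:nnell2:osLH_alpha(W)>0}, since $\lmax > 0$ and $W$ is non-singular (so that $\qr{\lmax}$ from~\eqref{eq:qr} is well-defined and positive definite), I would bound the weighted log-norm at almost every $x$ by the maximum over $\PH$, use the log-norm translation property to pull out the $-I_n$ term, and close the estimate with part~\ref{fact:nnell2:lognorm_alpha(W)>0} of Theorem~\ref{thm:prop_lognorm}, obtaining
\begin{align*}
\wlognorm{2}{\qr{\lmax}}{D\fh(x)}
&\leq \max_{d\in[0,1]^n} \wlognorm{2}{\qr{\lmax}}{-I_n + W[d]} \\
&= -1 + \max_{d\in[0,1]^n} \wlognorm{2}{\qr{\lmax}}{W[d]} = -1 + \lmax.
\end{align*}
Part~\ref{fact:nnell2:osLH_alpha(W)<0} is handled by the same template with the weight $(-W)^{1/2}$ -- well-defined because $\lmax < 0$ forces $-W \succ 0$ -- and with part~\ref{fact:nnell2:lognorm_alpha(W)<0} of Theorem~\ref{thm:prop_lognorm} in place of part~\ref{fact:nnell2:lognorm_alpha(W)>0}, yielding the bound $-1$.

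Finally, Theorem~\ref{th:equivalence_loc_lip_func} promotes these almost-everywhere bounds on the weighted Euclidean log-norm into the advertised bounds on the one-sided Lipschitz constants $\osLip_{2,\qr{\lmax}}(\fh)$, respectively $\osLip_{2,(-W)^{1/2}}(\fh)$. I do not expect any real obstacle, since Theorem~\ref{thm:prop_lognorm} has already done all of the algebraic work; the only points requiring care are (a) that the Jacobian lives in $\PH$ rather than $\PF$ because of the placement of $W$, and (b) that the non-singularity hypothesis on $W$ is precisely what is needed for $\qr{\lmax}$ in case~\ref{fact:nnell2:osLH_alpha(W)>0} to be well-defined via~\eqref{eq:qr}, the case $\lmax < 0$ already forcing $W$ to be non-singular on its own.
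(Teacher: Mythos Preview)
Your proposal is correct and follows essentially the same approach as the paper's own proof: compute the Jacobian $-I_n + W D\Phi(x)$, bound its weighted log-norm by the maximum over $\PH$, apply the translation property together with the relevant part of Theorem~\ref{thm:prop_lognorm}, and then invoke Theorem~\ref{th:equivalence_loc_lip_func}. Your write-up is in fact slightly more explicit than the paper's about why the Jacobian lands in $\PH$ and why non-singularity is needed for $\qr{\lmax}$ to be defined.
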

\begin{proof} 
Regarding part~\ref{fact:nnell2:osLH_alpha(W)>0}, note that for almost all $x \in \R^n$ we have
\begin{align*}
\wlognorm{2}{\qr{\lmax}}{D{\fh}(x)} &= \wlognorm{2}{\qr{\lmax}}{-I_n+ W D\Phi(x)}\\
&\leq \max_{d\in[0,1]^n} \wlognorm{2}{\qr{\lmax}}{-I_n+ W[d]}\\
&= -1{+}\lmax,
\end{align*}
where the last equality follows by the log-norm translation property and part~\ref{fact:nnell2:lognorm_alpha(W)>0} in Theorem~\ref{thm:prop_lognorm}.
The proof then follows by applying Theorem~\ref{th:equivalence_loc_lip_func}.
Part~\ref{fact:nnell2:osLH_alpha(W)<0} can be proved similarly, using part~\ref{fact:nnell2:lognorm_alpha(W)<0} in Theorem~\ref{thm:prop_lognorm}.
\end{proof}
\begin{rem}
\label{rem:tight_ine_hnn}
Following the same reasoning as in Appendix~\ref{apx:tight_ine_fnn}, under the same assumptions of Theorem~\ref{thm:ell2_osLip_alpha(W)_hop}, if the activation function satisfies $\inf_{x \in \R} \phi'(x) = 0$, and $\sup_{x \in \R} \phi'(x) = 1$, then the inequalities in Theorem~\ref{thm:ell2_osLip_alpha(W)_hop} are tight.
\end{rem}
\begin{cor}[Euclidean contractivity of the \HNN with non-singular symmetric weights]\label{cor:ell2_contractivity_hop}
Under the same assumptions and notations as in Theorem~\ref{thm:ell2_osLip_alpha(W)_hop},
%Consider the \HNN \eqref{eq:hopfield_nn} satisfying~\ref{ass:act_func_slope_restricted} with non-singular synaptic matrix $W = W^\top\in\R^{n\times{n}}$ having decomposition~\eqref{eq:decomposition_W}.
\begin{enumerate}
\item if $\lmax = 1$, then the \HNN is weakly infinitesimally contracting with respect to ${\norm{\cdot}_{2,\qr{\lmax}}}$;
\item if $0 < \lmax < 1$, then the \HNN is strongly infinitesimally contracting with rate $1-\lmax>0$ with respect to ${\norm{\cdot}_{2,\qr{\lmax}}}$;
\item if $\lmax < 0$, then the \HNN is strongly infinitesimally contracting with rate $1$ with respect to ${\norm{\cdot}_{2,(-W)^{1/2}}}$.
\end{enumerate}
\end{cor}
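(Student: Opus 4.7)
The proof plan is essentially a direct translation from Theorem~\ref{thm:ell2_osLip_alpha(W)_hop} into the language of contractivity via Definition~\ref{def:contracting_system}, so no new machinery is required. In each case the strategy is the same: extract the relevant one-sided Lipschitz bound on $\fh$ with respect to the appropriate weighted Euclidean norm, then identify the contraction rate $c$ as the negative of that bound and verify the sign condition in Definition~\ref{def:contracting_system}. Since the HNN dynamics~\eqref{eq:hopfield_nn} is defined on $C = \R^n$ (which is open, convex, and trivially $f$-invariant), the domain hypothesis of the definition is automatic, and $\fh$ is locally Lipschitz in $x$ because $\Phi$ is (by Assumption~\ref{ass:act_func_slope_restricted}), so Theorem~\ref{th:equivalence_loc_lip_func} is already built into Theorem~\ref{thm:ell2_osLip_alpha(W)_hop}.

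I would then handle the three regimes in turn. For $\lmax = 1$, part~\ref{fact:nnell2:osLH_alpha(W)>0} of Theorem~\ref{thm:ell2_osLip_alpha(W)_hop} gives $\osLip_{2,\qr{\lmax}}(\fh) \leq -1 + \lmax = 0$, matching the weak contractivity condition $c = 0$ in Definition~\ref{def:contracting_system} with norm $\norm{\cdot}_{2,\qr{\lmax}}$. For $0 < \lmax < 1$, the same part yields $\osLip_{2,\qr{\lmax}}(\fh) \leq -1 + \lmax = -(1 - \lmax)$, and since $1 - \lmax > 0$, this is precisely strong contractivity with rate $c = 1 - \lmax$ in the norm $\norm{\cdot}_{2,\qr{\lmax}}$. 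Finally, for $\lmax < 0$, part~\ref{fact:nnell2:osLH_alpha(W)<0} gives $\osLip_{2,(-W)^{1/2}}(\fh) \leq -1$, which yields strong contractivity at rate $c = 1$ in the norm $\norm{\cdot}_{2,(-W)^{1/2}}$.

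I do not anticipate any substantial obstacle: the non-trivial content (namely, the choice of weight matrices $\qr{\lmax}$ and $(-W)^{1/2}$, and the bounds they produce on the logarithmic norm over the polytope $\PH$) has already been absorbed into Theorem~\ref{thm:ell2_osLip_alpha(W)_hop} via Theorem~\ref{thm:prop_lognorm}. The only care to take is to state explicitly that the bound $\osLip \leq -c$ from the definition is satisfied uniformly in $t$, which is immediate here since the bounds in Theorem~\ref{thm:ell2_osLip_alpha(W)_hop} do not depend on the external input $\uh$ and hold for all $x \in \R^n$.
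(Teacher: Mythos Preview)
Your proposal is correct and matches the paper's approach: the paper states Corollary~\ref{cor:ell2_contractivity_hop} without proof, treating it as an immediate consequence of the one-sided Lipschitz bounds in Theorem~\ref{thm:ell2_osLip_alpha(W)_hop} together with Definition~\ref{def:contracting_system}, which is precisely what you spell out.
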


Finally, we give sufficient infinitesimal contractivity conditions of the \HNN with singular symmetric synapses (see Section~\ref{sec:additional_results} for the proof).
\bt[Contractivity of the \HNN with singular symmetric weights]
\label{thm:Hgen_ell2}
Consider the \HNN \eqref{eq:hopfield_nn} satisfying Assumptions~\ref{ass:symmetric_synaptic_matrix},~\ref{ass:act_func_slope_restricted} with $W$ having kernel $\mathcal{K} \neq \{\0_n\}$, and such that $\lmax<1$. Then, for each $\eps \in {]0, 1-\alpha(W)[}$ the \HNN is strongly infinitesimally contracting with rate $|1{-}\alpha(W){-}\eps|$.
\et
\begin{rem}
If $W = 0$, then the \FNN~\eqref{eq:firing_rate_nn} and the \HNN~\eqref{eq:hopfield_nn} are contracting with rate 1. As a consequence of Corollaries~\ref{cor:ell2_contractivity_fr},~\ref{cor:ell2_contractivity_hop} and Theorem~\ref{thm:Hgen_ell2}, when coupling is added to the networks, they remain (strongly) contracting as long as ${\alpha(W)} < 1$. Note that the entries of $W$ are allowed to be large, so as the activation function and this allows to have different types of coupling as long as the matrix $I_n - W$ is Hurwitz.
\end{rem}
\section{Proofs and Additional Results}\label{sec:additional_results}
We now present additional algebraic results on matrix polytopes and symmetric matrices, and the proofs of Theorems~\ref{thm:prop_lognorm} and~\ref{thm:Hgen_ell2}. First, we give a technical result for the spectral abscissa of matrix polytopes.
\begin{lem}[Lower bound on spectral abscissa of polytope of matrices]
\label{lemma:lower-bound-alpha-d} For any $W\in\R^{n\times{n}}$, we have
\begin{align}
&\max_{d\in[0,1]^n} \alpha([d]W) \geq \lmax_+ \label{eq:lower-bound-alpha-d_fr},\\
&\max_{d\in[0,1]^n} \alpha(W[d]) \geq \lmax_+.\label{eq:lower-bound-alpha-d_h}
\end{align}
\end{lem}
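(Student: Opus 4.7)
The statement looks amenable to a direct evaluation at two extreme vertices of the hypercube $[0,1]^n$, so my plan is to bound the supremum from below by exhibiting two specific choices of $d$ and then take the larger of the two resulting values.

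First, choose $d = \1_n$. Then $[d] = I_n$ and therefore $[d]W = W$ as well as $W[d] = W$, yielding
\[
\max_{d\in[0,1]^n} \alpha([d]W) \;\geq\; \alpha(W), \qquad \max_{d\in[0,1]^n} \alpha(W[d]) \;\geq\; \alpha(W).
\]
Next, choose $d = \0_n$. Then $[d] = 0$, so $[d]W = W[d] = 0 \in \R^{n\times n}$, whose spectrum is $\{0\}$, hence $\alpha(0) = 0$. This gives
\[
\max_{d\in[0,1]^n} \alpha([d]W) \;\geq\; 0, \qquad \max_{d\in[0,1]^n} \alpha(W[d]) \;\geq\; 0.
\]

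Combining the two lower bounds and recalling that $\lmax_+ = \max\{\alpha(W),\,0\}$ by the definition of $(\cdot)_+$ in Section~\ref{sec:math_preliminaries}, we immediately obtain~\eqref{eq:lower-bound-alpha-d_fr} and~\eqref{eq:lower-bound-alpha-d_h}. There is no real obstacle here: the result is essentially an observation that the identity matrix and the zero matrix both lie in the family $\{[d] : d\in[0,1]^n\}$, and together they certify both the $\alpha(W)$ and the $0$ lower bounds that make up $\lmax_+$. The only care required is to note that the statement does \emph{not} assume $W$ is symmetric, so one cannot invoke Assumption~\ref{ass:symmetric_synaptic_matrix}; the argument above indeed works for arbitrary $W\in\R^{n\times n}$.
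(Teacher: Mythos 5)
Your proposal is correct and follows essentially the same route as the paper: evaluating at the vertices $d=\0_n$ and $d=\1_n$ gives the lower bounds $0$ and $\alpha(W)$, whose maximum is $\lmax_+$. The paper additionally remarks that the maximum is well defined (continuity of the spectral abscissa plus compactness of the polytope), a minor point you could add for completeness, but your argument is otherwise the same.
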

\begin{proof}
First, note that the spectral abscissa is a continuous function and that the set $\PF$
%$\setdef{[d]W}{d\in[0,1]^n}$
is compact, hence the maximum is well defined. To prove~\eqref{eq:lower-bound-alpha-d_fr} we compute:
%The result follows from the fact that the maximum value is at least as large as the value at $d=\0_n$ and $d=\1_n$, that is,
\begin{align*}
\max_{d\in[0,1]^n} \alpha([d]W)& \geq \max \{\, \alpha([d]W) |_{d=\0_n}, \alpha([d]W) |_{d=\1_n}\,\} \\
&= \max \{\, 0, \lmax\,\} = \lmax_+.
\end{align*}
The same calculation applies to prove inequality~\eqref{eq:lower-bound-alpha-d_h}.
\end{proof}

We now give the proof of Theorem~\ref{thm:prop_lognorm}. To enhance clarity we prove its parts case by case.
Lemma~\ref{lemma:splitting} and parts~\ref{fact:nnell2:lognorm_alpha(W)>0} and~\ref{fact:nnell2:lognorm_alpha(W)=0} in Theorem~\ref{thm:prop_lognorm}, are based upon and extend the treatment in~\cite[Theorem~2]{LK-ME-JJES:22} -- see our statement of contributions.
\begin{lem}[Splitting upper-bounded symmetric matrices]
\label{lemma:splitting}
Consider $W$ satisfying Assumptions~\ref{ass:symmetric_synaptic_matrix}. Assume $W\preceq \lm I_n$, for some $\lm > 0$ and let $\fsplit_{\lm}(\cdot)$ and $\ql{\lm}$ be defined in~\eqref{eq:theta} and~\eqref{eq:ql}, respectively. Then,
\begin{equation}
\label{eq:function_decomposition_W}
W = \ql{\lm}{-}\frac{1}{4\lm} \ql{\lm}^2.
\end{equation}
\end{lem}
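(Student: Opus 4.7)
The plan is to reduce the matrix identity to a scalar identity on the eigenvalues of $W$ using the orthogonal diagonalization, and then verify the scalar identity by a change of variables.

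First, I would use Assumption~\ref{ass:symmetric_synaptic_matrix} to write $W = U\Lambda U^\top$ as in \eqref{eq:decomposition_W}. The hypothesis $W \preceq \lm I_n$ means $\lambda_i \leq \lm$ for every $i$, so each $\lambda_i$ lies in the domain $]{-}\infty,\lm]$ of $\fsplit_{\lm}(\cdot)$ defined in~\eqref{eq:theta}, and $\fsplit_{\lm}(\Lambda)$ is a well-defined diagonal matrix with positive entries. Using the orthogonality $U^\top U = I_n$, it follows that $\ql{\lm}^2 = U \fsplit_{\lm}(\Lambda)^2 U^\top$, and hence
\begin{equation*}
\ql{\lm} - \frac{1}{4\lm}\ql{\lm}^2 = U\Bigl(\fsplit_{\lm}(\Lambda) - \frac{1}{4\lm}\fsplit_{\lm}(\Lambda)^2\Bigr)U^\top.
\end{equation*}
Comparing with $W = U\Lambda U^\top$, the claim~\eqref{eq:function_decomposition_W} is therefore equivalent to the scalar identity
\begin{equation*}
\fsplit_{\lm}(z) - \frac{1}{4\lm}\fsplit_{\lm}(z)^2 = z \quad\text{for all } z \in {]{-}\infty,\lm]}.
\end{equation*}

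Second, I would verify this scalar identity by the substitution $s := \sqrt{1-z/\lm} \geq 0$, which gives $z = \lm(1-s^2)$ and $\fsplit_{\lm}(z) = 2\lm(1+s)$. A direct computation then yields
\begin{equation*}
\fsplit_{\lm}(z) - \frac{\fsplit_{\lm}(z)^2}{4\lm} = 2\lm(1+s) - \lm(1+s)^2 = \lm(1+s)\bigl(2 - (1+s)\bigr) = \lm(1+s)(1-s) = \lm(1-s^2) = z,
\end{equation*}
which closes the proof.

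No step here is a genuine obstacle; the argument is essentially a one-line calculation once the correct change of variable is spotted. The only subtlety worth flagging is the requirement $\lm > 0$ (needed so that $\fsplit_{\lm}$ is defined and $\ql{\lm}\succ 0$) and the fact that the functional calculus via $U$ commutes with squaring because $U$ is orthogonal; both are guaranteed by the standing hypotheses. The identity also explains the definition of $\fsplit_{\lm}$: it is precisely the positive root of the quadratic $t^2/(4\lm) - t + z = 0$ in $t$, i.e., the larger solution of the quadratic obtained by interpreting \eqref{eq:function_decomposition_W} scalar-wise.
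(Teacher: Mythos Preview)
Your proof is correct and follows essentially the same approach as the paper: both reduce the matrix identity to the scalar identity $\fsplit_{\lm}(z) - \fsplit_{\lm}(z)^2/(4\lm) = z$ via the orthogonal diagonalization of $W$, and then verify that scalar identity by direct computation (the paper expands the square explicitly, while your substitution $s = \sqrt{1-z/\lm}$ is a slightly cleaner way to do the same algebra).
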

\begin{proof}
By definition of the function $\fsplit_{\lm}(\cdot)$, for all $\lambda_i \leq \lm$, $i\in \until{n}$, it holds
\beq
\label{eq:1_proof_decompo_W}
\lambda_i = \fsplit_{\lm}(\lambda_i){-}\frac{1}{4{\lm}}\fsplit_{\lm}(\lambda_i)^2.
\eeq
In fact, we have
\begin{align*}
\fsplit_{\lm}(\lambda_i){-}\frac{1}{4{\lm}}\fsplit_{\lm}(\lambda_i)^2 & =  2\lm\Bigg(1+\sqrt{1{-}\frac{\lambda_i}{\lm}}\Bigg){-}\frac{1}{4{\lm}}4\lm^2\Bigg(1+\sqrt{1{-}\frac{\lambda_i}{\lm}}\Bigg)^2 \\
&= 2\lm\Bigg(1+\sqrt{1{-}\frac{\lambda_i}{\lm}}\Bigg) {-}\lm\Bigg(1+2\sqrt{1{-}\frac{\lambda_i}{\lm}} + 1{-}\frac{\lambda_i}{\lm} \Bigg)\\
&=\lm \Bigg(2+2\sqrt{1{-}\frac{\lambda_i}{\lm}}{-}2{+}\frac{\lambda_i}{\lm}{-}2\sqrt{1{-}\frac{\lambda_i}{\lm}} \Bigg)\\
&= \lambda_i.
\end{align*}
Equation~\eqref{eq:1_proof_decompo_W} implies $\Lambda = \fsplit_{\lm}(\Lambda){-}\frac{1}{4{\lm}}\fsplit_{\lm}(\Lambda)^2$. Equality~\eqref{eq:function_decomposition_W} follows by multiplying by $U$ and $U^\top$ to the left and to the right, respectively, with $U$ defined in~\eqref{eq:decomposition_W}.
\end{proof}
First, we prove part~\ref{fact:nnell2:lognorm_alpha(W)>0}, i.e., the log-optimality of the norm $\norm{\cdot}_{2,\ql{\lmax}}$ and, when $W$ is invertible, of $\norm{\cdot}_{2,\qr{\lmax}}$ for multiplicatively-scaled matrices with positive maximum eigenvalue.
\begin{proof}[Proof of part~\ref{fact:nnell2:lognorm_alpha(W)>0}]
First, we prove that
$\norm{\cdot}_{2,\ql{\lmax}}$ is log-optimal for $\PF$ and $\displaystyle \max_{d\in[0,1]^n}\alpha([d]W) = \lmax$.
To this purpose, define 
\[
P := \frac{1}{4\lmax}\ql{\lmax}^2 \succ 0.
\]
Lemma~\ref{lemma:splitting} implies $W=\ql{\lmax}-P$. Next, pick $d\in\R^n$ satisfying $\0_n<d\leq\1_n$, so that $[d]$ is diagonal and invertible. Then
\begin{align}
&2\lmax P{-}\frac{1}{2}\ql{\lmax}^2 \succeq 0  \label{eq:starting-LMI_alpha(W)>0} \\
&\!\!\!\!\implies 2\lmax P{-}\frac{1}{2}\ql{\lmax}[d]\ql{\lmax}  \succeq 0   \nonumber \\
&\!\!\!\!\iff 2\lmax P{-}\ql{\lmax}[d]P(2P[d]P)^{-1}P[d]\ql{\lmax}  \succeq 0. \nonumber
\end{align}
Since $P[d]P \succ 0$, we can apply the Schur complement to this LMI to conclude that
\begin{equation} \label{eq:Kaz-trick_alpha(W)>0}
y^\top
\begin{bmatrix}
2\lmax P & -\ql{\lmax}[d]P \\
-P[d]\ql{\lmax} & 2P[d]P
\end{bmatrix}
y \geq 0, \quad \forall y \in \R^{2n}.
\end{equation}
Setting $y = (y_1,y_1)$ for arbitrary $y_1 \in \R^n$, the inequality~\eqref{eq:Kaz-trick_alpha(W)>0} implies
\begin{align}
&2\lmax P{-}\ql{\lmax}[d]P{-}P[d]\ql{\lmax}{+}2P[d]P \succeq 0 \nonumber \\
&\!\!\!\!\iff \ql{\lmax}[d]P{+}P[d]\ql{\lmax}{-}2P[d]P \preceq 2\lmax P \nonumber \\
&\!\!\!\!\overset{W = \ql{\lmax}-P}{\iff} W[d]P{+}P[d]W \preceq 2 \lmax P \nonumber \\
&\!\!\!\!\iff \ql{\lmax}^2[d]W{+}W[d]\ql{\lmax}^2 \preceq 2 \lmax\ql{\lmax}^2.
\label{eq:final-LMI_alpha(W)>0}
\end{align}
In summary, we have established that the weak LMI~\eqref{eq:starting-LMI_alpha(W)>0} (independent of $d$) implies the weak LMI~\eqref{eq:final-LMI_alpha(W)>0} for all $0<d\leq\1_n$. Here, by weak LMI, we mean to state that the linear matrix inequality is not strict. It is known~\cite[Theorem~6.3.5]{RAH-CRJ:12} that the eigenvalues of a symmetric matrix are continuous functions of the matrix entries. Therefore, the LMI~\eqref{eq:final-LMI_alpha(W)>0} holds also for $\0_n\leq d\leq\1_n$. Finally, note that the LMI~\eqref{eq:final-LMI_alpha(W)>0} is equivalent to the condition ${\wlognorm{2}{\ql{\lmax}}{[d]W} \leq \lmax}$ for all ${d\in[0,1]^n}$, therefore
\[
\max_{d \in [0,1]^n} \wlognorm{2}{\ql{\lmax}}{[d]W} \leq \lmax.
\]
Moreover, it is well known~\cite{CAD-MV:1975} that for every log-norm $\mu$ and every matrix $A$ it holds $\alpha(A) \leq \mu(A)$. Specifically in our case:
\[
\max_{d \in [0,1]^n}\alpha([d]W) \leq \max_{d \in [0,1]^n} \wlognorm{2}{\ql{\lmax}}{[d]W}.
\]
The proof then follows from~\eqref{eq:lower-bound-alpha-d_fr}, after noticing that in this case $\lmax_{+} = \lmax$.
%This completes the proof of~\eqref{eq:opt-lognorm_fr_alpha>0}.

Next, assume that $W$ is invertible. We need to prove that $\norm{\cdot}_{2,\qr{\lmax}}$ is log-optimal for $\PH$ and that it holds $\displaystyle \max_{d\in[0,1]^n}\alpha(W[d]) = \lmax$.
We have
\begin{align*}
\max_{d\in[0,1]^n} \wlognorm{2}{\qr{\lmax}}{W[d]} & = \max_{d\in[0,1]^n} 
\wlognorm{2}{\ql{\lmax}W^{-1}}{W[d]}\\
&\overset{\eqref{prop:silly-weightedmu}}{=} \max_{d\in[0,1]^n}\wlognorm{2}{\ql{\lmax}}{[d]W}\\ &= \lmax,
\end{align*}
where the last equality follows from the log-optimality of $\norm{\cdot}_{2,\ql{\lmax}}$ for $\PF$. The proof again follows from~\eqref{eq:lower-bound-alpha-d_fr}.
\end{proof}

The proof of part~\ref{fact:nnell2:lognorm_alpha(W)=0} of Theorem~\ref{thm:prop_lognorm}, i.e., the log-optimality of the weighted $\ell_2$ norm $\norm{\cdot}_{2,\ql{\eps}}$ for multiplicatively-scaled negative semidefinite matrices, follows the same reasoning as that of part~\ref{fact:nnell2:lognorm_alpha(W)>0} by considering $\eps>0$ instead of $\lmax$. Hence, we omit it here for brevity.

Finally, we prove part~\ref{fact:nnell2:lognorm_alpha(W)<0}, i.e., the log-optimality of $\norm{\cdot}_{2,(-W)^{1/2}}$ for multiplicatively-scaled negative definite matrices. %, i.e., matrices having negative maximum eigenvalue.
To do so, we give the following algebraic result.
\begin{lem}[Optimal norms for products of symmetric matrices]
\label{lem:prod_sym_matrices}
Let $A_1 = S\Q \in \R^{n\times n}$ and $A_2 = \Q S \in \R^{n\times n}$ where $S$, $\Q\in\S^n$, with $Q \succ 0$. Then, for each $i\in \{\,1,2\,\}$,
\begin{enumerate}
\item \label{stat1-lem:prod_sym_matrices}
$\spec(A_i)$ is real and has the same number of negative, zero, and positive eigenvalues as $S$;
\item \label{stat2-lem:prod_sym_matrices}
the norm $\norm{\cdot}_{2,\Q^{1/2}}$ is optimal for the matrix $A_i$, i.e., $\norm{A_i}_{2,\Q^{1/2}} = \rho(A_i)$;
\item \label{stat3-lem:prod_sym_matrices}
the norm $\norm{\cdot}_{2,\Q^{1/2}}$ is log-optimal for $A_i$, i.e., $\wlognorm{2}{\Q^{1/2}}{A_i} = \alpha(A_i)$.
\end{enumerate}
\end{lem}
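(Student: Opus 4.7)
The plan is to reduce everything to a single symmetric matrix via a similarity transformation. The key observation is that conjugation by $Q^{1/2}$ (or $Q^{-1/2}$) turns both $A_1 = SQ$ and $A_2 = QS$ into the symmetric matrix
\[
M := Q^{1/2} S Q^{1/2}.
\]
Explicitly, I would verify that $Q^{1/2} A_1 Q^{-1/2} = Q^{1/2} S Q^{1/2} = M$ and $Q^{-1/2} A_2 Q^{1/2} = Q^{1/2} S Q^{1/2} = M$. Since $Q \succ 0$, its principal square root $Q^{1/2}$ is symmetric and invertible, so both $A_1$ and $A_2$ are similar to $M$, and hence share its spectrum.

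For part~\ref{stat1-lem:prod_sym_matrices}, since $M \in \S^n$ its spectrum is real, and by similarity so is $\spec(A_i)$. The inertia claim then reduces to comparing the inertias of $M$ and $S$: because $M = Q^{1/2} S Q^{1/2}$ with $Q^{1/2}$ symmetric and invertible, $M$ is congruent to $S$, and Sylvester's law of inertia yields the equality of the number of negative, zero, and positive eigenvalues.

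For parts~\ref{stat2-lem:prod_sym_matrices} and~\ref{stat3-lem:prod_sym_matrices}, I would use the similarity property of the weighted $\ell_2$ norm and log-norm, namely $\|A\|_{2,Q^{1/2}} = \|Q^{1/2} A Q^{-1/2}\|_2$ and $\mu_{2,Q^{1/2}}(A) = \mu_2(Q^{1/2} A Q^{-1/2})$. Applied to $A_1$, this gives $\|A_1\|_{2,Q^{1/2}} = \|M\|_2$ and $\mu_{2,Q^{1/2}}(A_1) = \mu_2(M)$. Because $M$ is symmetric, $\|M\|_2 = \rho(M)$ and $\mu_2(M) = \lambda_{\max}(M) = \alpha(M)$. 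Finally, similarity gives $\rho(A_1) = \rho(M)$ and $\alpha(A_1) = \alpha(M)$, yielding the stated equalities. The argument for $A_2$ is identical, using $Q^{-1/2} A_2 Q^{1/2} = M$ together with the analogous identities $\|A_2\|_{2, Q^{-1/2}} = \|M\|_2$ — or, equivalently, observing that $A_2 = A_1^\top$ when considered through the congruence, so $\rho(A_2) = \rho(A_1)$ and $\alpha(A_2) = \alpha(A_1)$.

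There is no real obstacle here; the only subtlety to check carefully is that the conjugation lands on exactly the same symmetric matrix $M$ from both sides (a short algebraic verification), and that the invariance of the weighted $\ell_2$ log-norm under the change of variables $x \mapsto Q^{1/2} x$ delivers $\mu_{2,Q^{1/2}}(A_i) = \mu_2(M)$. Once these are in place, parts~\ref{stat2-lem:prod_sym_matrices} and~\ref{stat3-lem:prod_sym_matrices} follow immediately from the well-known facts $\|M\|_2 = \rho(M)$ and $\mu_2(M) = \alpha(M)$ for symmetric $M$.
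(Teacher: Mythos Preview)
Your argument for $A_1$ is correct and essentially coincides with the paper's: both rest on the similarity $Q^{1/2}A_1Q^{-1/2}=Q^{1/2}SQ^{1/2}=:M\in\S^n$, after which Sylvester's law of inertia and the symmetric-matrix facts $\norm{M}_2=\rho(M)$ and $\lognorm{2}{M}=\alpha(M)$ give parts~\ref{stat1-lem:prod_sym_matrices}--\ref{stat3-lem:prod_sym_matrices}. The paper carries out the $\norm{\cdot}_{2,Q^{1/2}}$ and $\wlognorm{2}{Q^{1/2}}{\cdot}$ computations directly from their eigenvalue formulas rather than naming $M$, but the content is the same.

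For $A_2$ there is a genuine gap. The conjugation you identify, $Q^{-1/2}A_2Q^{1/2}=M$, is the one attached to the weight $Q^{-1/2}$: it yields $\norm{A_2}_{2,Q^{-1/2}}=\rho(A_2)$ and $\wlognorm{2}{Q^{-1/2}}{A_2}=\alpha(A_2)$, not the claimed equalities with weight $Q^{1/2}$. Your fallback ``$A_2=A_1^\top$'' only matches the right-hand sides $\rho(A_2)=\rho(A_1)$ and $\alpha(A_2)=\alpha(A_1)$; it says nothing about $\norm{A_2}_{2,Q^{1/2}}$ or $\wlognorm{2}{Q^{1/2}}{A_2}$, since neither weighted quantity is transpose-invariant when $Q\neq I_n$. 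This gap cannot be closed, because parts~\ref{stat2-lem:prod_sym_matrices} and~\ref{stat3-lem:prod_sym_matrices} are in fact false for $i=2$ as stated. With $Q=\begin{bmatrix}1&0\\0&4\end{bmatrix}$ and $S=\begin{bmatrix}0&1\\1&0\end{bmatrix}$ one gets $A_2=QS=\begin{bmatrix}0&1\\4&0\end{bmatrix}$, $\rho(A_2)=\alpha(A_2)=2$, but $Q^{1/2}A_2Q^{-1/2}=\begin{bmatrix}0&1/2\\8&0\end{bmatrix}$, whence $\norm{A_2}_{2,Q^{1/2}}=8$ and $\wlognorm{2}{Q^{1/2}}{A_2}=17/4$. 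The paper's proof treats only $i=1$ and declares $i=2$ ``a straightforward adaptation''; your computation actually shows that the adaptation lands on the weight $Q^{-1/2}$, which is the correct version of the statement for $A_2$.
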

\begin{proof}
Let $i = 1$. $A_1$ is similar to $\Q^{1/2}S \Q^{1/2} \in \S^n$, hence $\spec(A_1)$ is real. Part~\ref{stat1-lem:prod_sym_matrices} then follows from Sylvester’s law of inertia, noting that $\Q^{1/2}S\Q^{1/2}$ is congruent to $S$.
Regarding part~\ref{stat2-lem:prod_sym_matrices}, we compute
\begin{align*}
\norm{A_1}_{2,\Q^{1/2}}^2 &= \subscr{\lambda}{max}(\Q^{-1}A_1^\top \Q A_1) = \subscr{\lambda}{max}(\Q^{-1}(\Q S)\Q (S\Q))\\
&=\subscr{\lambda}{max}((S\Q)^2) = \rho(S\Q)^2,
\end{align*}
where the last equality follows from the fact that $(S\Q)^2$ has the same eigenvectors as $S\Q$ and real eigenvalues equal to the square of the real eigenvalues of $S\Q$. Finally, to prove part~\ref{stat3-lem:prod_sym_matrices} we compute
\begin{align*}
\wlognorm{2}{\Q^{1/2}}{A_1} &= \subscr{\lambda}{max}\Big(\frac{\Q A_1 \Q^{-1}{+}A_1^\top}{2}\Big) = \subscr{\lambda}{max}\Big(\frac{\Q (S \Q)\Q^{-1}{+}\Q S}{2}\Big)\\
&= \subscr{\lambda}{max}(\Q S) =  \subscr{\lambda}{max}(\Q S \Q\Q^{-1})\\
& = \subscr{\lambda}{max}(\Q A_1\Q^{-1}) = \subscr{\lambda}{max}(A_1) \\
&= \alpha(A_1).
\end{align*}
This concludes the proof of part~\ref{stat2-lem:prod_sym_matrices}.
The proof for $i = 2$ is a straightforward adaptation.
\end{proof}
\begin{proof}[Proof of part~\ref{fact:nnell2:lognorm_alpha(W)<0}]
Pick $d\in\R^n$ satisfying $\0_n\leq d\leq\1_n$ and consider the matrices $[d]W$ and $W[d]$. Lemma~\ref{lem:prod_sym_matrices} with ${S:=[-d]}$ and ${\Q:=-W \succ 0}$, implies that the spectrum of the product matrices ${[d]W =[-d](-W)}$ and ${W[d] =(-W)[-d]}$ is real and has the same number of negative, zero, positive eigenvalues as ${[-d]}$. Therefore,
\begin{align}
\wlognorm{2}{(-W)^{1/2}}{[d]W} &= \alpha([d]W)
\left\{\,
\begin{matrix}
< 0    & \text{ if } d>\0_n,\\
\leq 0 & \text{ otherwise,} 
\end{matrix}
\right. \\
\wlognorm{2}{(-W)^{1/2}}{W[d]} &= \alpha(W[d])
\left\{\,
\begin{matrix}
< 0    & \text{ if } d>\0_n,\\
\leq 0 & \text{ otherwise.} 
\end{matrix}
\right.
\end{align}
Maximizing over $d \in [0,1]^n$ we get part~\ref{fact:nnell2:lognorm_alpha(W)<0}.
\end{proof}
Finally, we give the proof of Theorem~\ref{thm:Hgen_ell2}.
\begin{proof}[Proof of Theorem~\ref{thm:Hgen_ell2}]
Let $r$ be the number of non-zero eigenvalues of $W \in \R^{n\times n}$.
%Note that the assumptions, in this case, imply that $\lmax\geq 0$.
Without loss of generality, we reorder the elements in $\lambda \in \R^n$ and $U\in \R^{n\times n}$, so that $\lambda = (\lambda_1, \dots, \lambda_r, 0,\dots, 0)$ and $U = [u_1, \dots, u_r, u_{r+1},\dots,u_n]$, where $u_i \in \R^n$ is the eigenvector of $\lambda_i \in \R$.

Next, let $\K^{*} := \spn\{\,u_1,\dots,u_r\,\}$, $\nparal := \dim(\K^*)$, ${\K := \spn \{\,u_{r+1},\dots,u_n\,\}}$, $\nperp := \dim(\K)$, and define $\Uparal := [u_1, \dots, u_r] \in \R^{n\times \nparal}$, $\Uperp := [u_{r+1},\dots,u_n]\in \R^{n\times \nperp}$, so that $U = [\Uparal \quad \Uperp]$.

We have $\R^{n} = \{\,x \in \R^n \ | \ x \in \K^*\,\} \oplus \{\,x \in \R^n\ |\ x \in \K\,\}$. Therefore, given $x \in \R^n$ we can always define ${\xparal = \Uparal^{\top}x \in \K^{*}}$ and $\xperp = \Uperp^{\top}x \in \K$.
We note that $\trasp{U} U = I_n$ implies  $\trasp{\Uparal}\Uparal= I_{\nparal}$, $\trasp{\Uperp} \Uperp = I_{\nperp}$, $\trasp{\Uperp} \Uparal = \0_{\nperp \times \nparal}$, and $\trasp{\Uparal}\Uperp = \0_{\nparal \times \nperp}$. Also,
\begin{align*}
W &= [\Uparal \quad \Uperp]
\begin{bmatrix}
\Lambda_{\parallel} & 0_{\nparal \times \nperp} \\
0_{\nperp \times \nparal} & 0_{\nperp \times \nperp}
\end{bmatrix}
\begin{bmatrix}
\Uparal^{\top} \\
\Uperp^{\top}
\end{bmatrix}
= \Uparal \Lambda_{\parallel} \trasp{\Uparal},\\
\ql{\alpha(W)} &= U \theta_{\alpha(W)}(\Lambda) U^\top = [\Uparal \quad \Uperp]
\begin{bmatrix}
\theta_{\parallel} & 0_{\nparal \times \nperp} \\
0_{\nperp \times \nparal} & \theta_{\perp}
\end{bmatrix}
\begin{bmatrix}
\Uparal^{\top} \\
\Uperp^{\top}
\end{bmatrix}\\
&= \Uparal \theta_{\parallel} \Uparal^\top{+} \Uperp \theta_\perp \Uperp^\top.
\end{align*}

Moreover, we have
\beq \label{eq:log_max_fr_theta_paral}
\max_{d\in[0,1]^n}
\wlognorm{2}{\theta_\parallel}{- I_{\nparal}{+}\Uparal^\top[d]\Uparal\Lambda_{\parallel}} \leq{-}1{+}\alpha(W).
\eeq
In fact, from Corollary~\ref{cor:ell2_contractivity_fr} we know:
\begin{align*}
&2\alpha(W)\ql{\alpha(W)}{+}\ql{\alpha(W)}[d]W{+}W[d]\ql{\alpha(W)} \preceq 0\\
&\iff 2\alpha(W) (\Uparal \theta^2_{\parallel} \Uparal^\top{+} \Uperp \theta^2_\perp \Uperp^\top)\\
&\quad \quad \quad+(\Uparal \theta^2_{\parallel} \Uparal^\top{+} \Uperp \theta^2_\perp \Uperp^\top)[d]\Uparal \Lambda_{\parallel} \trasp{\Uparal}\\
&\quad \quad \quad+\Uparal \Lambda_{\parallel} \trasp{\Uparal}[d](\Uparal \theta^2_{\parallel} \Uparal^\top{+}\Uperp \theta^2_\perp \Uperp^\top)\preceq 0.
\end{align*}
By multiplying by $\Uparal^\top$ and $\Uparal$ to the left and to the right, respectively, we get
\begin{align}
2\alpha(W)\theta^2_{\parallel}{+} \theta^2_{\parallel} \Uparal^\top [d]\Uparal \Lambda_{\parallel}{+}\Lambda_{\parallel} \trasp{\Uparal}[d]\Uparal \theta^2_{\parallel} \preceq 0 .
\end{align}
Thus,
$\wlognorm{2}{\theta_\parallel}{- I_{\nparal}{+}\Uparal^\top[d]\Uparal\Lambda_{\parallel}} \leq -1+\alpha(W)$.
Next, by multiplying~\eqref{eq:hopfield_nn} by $\Uperp^{\top}$ and $\Uparal^{\top}$ we obtain the interconnected system:
\[
\left\{\,
\begin{array}{cc}
\Uperp^{\top} \hopdot ={-}\Uperp^{\top} \hop{+}\Uperp^{\top} W\Phi(\hop){+}\Uperp^{\top}\uh, \\
\Uparal^{\top} \hopdot ={-}\Uparal^{\top} \hop{+}\Uparal^{\top} W\Phi(W\hop){+}\Uparal^{\top}\uh,
\end{array}
\right.
\]
thus,
\begin{numcases}{}
\hopdot^\perp ={-}\hop^\perp{+}\uh^\perp := \fh^\perp(\hop^\perp, \uh^\perp), \label{eq:fperp_hnn}\\
\hopdot^\parallel ={-}\hop^\parallel{+}\Lambda_{\parallel} \Uparal^{\top}\Phi(\hop){+}\uh^{\parallel} := \fh^\parallel(\hop, \uh^{\parallel}).
\label{eq:fparallel_hnn}
\end{numcases}
Equation~\eqref{eq:fperp_hnn} is always contracting with respect to any norm in the subspace $\K$ with $\osLip(\fh^\perp) ={-}1$, being %$\mu(D \fh^\perp) = -1$.
$\mu(D \fh^\perp) = \mu(- I_{\nperp}) = -1$.
For system~\eqref{eq:fparallel_hnn} we define $\qr{\alpha(W)} := \ql{\alpha(W)}\mpi{W} = U  \theta_{\alpha{(W)}}\Lambda^{\dagger} \trasp{U}$, where $\mpi{W} = U \Lambda^{\dagger} \trasp{U}$, with
$$
\Lambda^{\dagger} =
\begin{bmatrix}
\Lambda_{\parallel}^{-1} & 0_{\nparal \times \nperp} \\
0_{\nperp \times \nparal} & 0_{\nperp \times \nperp}
\end{bmatrix}.
$$
Next, we note that the matrix $\subscr{Q}{H$\parallel$} := \Uparal^{\top} \ql{\alpha(W)} \mpi{W} \Uparal = \theta_\parallel \Lambda_{\parallel}^{-1}$ and that $D \fh^\parallel ={-}I_{\nparal}{+}\Lambda_{\parallel}\Uparal^\top[d]\Uparal$. Thus, we have
\begin{align*}
\osLip_{2,\subscr{Q}{H$\parallel$}}(\fh^\parallel) &\leq \max_{d\in[0,1]^n} \wlognorm{2}{\subscr{Q}{H$\parallel$}}{D \fh^\parallel}\\
&\leq \max_{d\in[0,1]^n}
\wlognorm{2}{\theta_\parallel \Lambda_{\parallel}^{-1}}{- I_{\nparal}{+}\Lambda_{\parallel}\Uparal^\top[d]\Uparal}\\
&\overset{\eqref{prop:silly-weightedmu}}{=} \max_{d\in[0,1]^n} 
\wlognorm{2}{\theta_\parallel}{- I_{\nparal}{+}\Uparal^\top[d]\Uparal\Lambda_{\parallel}}\\
&\overset{\eqref{eq:log_max_fr_theta_paral}}{\leq}{-}1{+}\alpha(W).
\end{align*}
Thus system~\eqref{eq:fparallel_hnn} is strongly infinitesimally contracting in $\K^{*}$ with respect to $\norm{\cdot}_{\subscr{Q}{H$\parallel$}}$ with rate $1-\alpha(W)$.

Finally, we note that at fixed $\xparal$ and $t$, the map $\xperp \to f_\parallel$ is Lipschitz with constant ${\textup{L}_{\parallel \perp} := \lmax}$. %$ < 1}$.
In fact, ${\forall \xperp^1, \xperp^2 \in \K}$, we get
\begin{align*}
\norm{f_{\parallel}(\xparal,\xperp^1){-}f_{\parallel}(\xparal,\xperp^2)} &= \norm{- \xparal{+}W\Phi(\xperp^1+\xparal){+}u{+}\xparal{-}W\Phi(\xperp^2+\xparal){-}u} \\
&= \norm{W(\Phi(\xperp^1+\xparal){-}\Phi(\xperp^2+\xparal))}\\
&\leq \lmax\norm{\Phi(\xperp^1+\xparal){-} \Phi(\xperp^2+\xparal)}\\
&\leq \lmax\norm{\xperp^1{-}\xperp^2}.
\end{align*}
We can now construct the gain matrix~\eqref{eq:def_gain_matrix}
\beq
\label{eq:gain_matrix}
\Gamma =
\begin{bmatrix}
- 1 & 0\\
\lmax &{-}1+\alpha(W)
\end{bmatrix}
\in \R^{2 \times 2}.
\eeq
The eigenvalues of $\Gamma$ are $\lambda_1 = -1, \lambda_2 ={-}1{+}\alpha(W)$.
The fact that $\K\neq \{\0_n\}$ implies $\alpha(W) \geq 0$. In turn, since by assumptions $\lmax<1$, we have $\lambda_2 \in {[-1, 0[}$. Thus $\Gamma$ is Hurwitz and $\alpha(\Gamma) ={-}1 +\alpha(W)$.
%Thus $\Gamma$ is Hurwitz and, being $1-\alpha(W) \in {[0,1[}$, $\alpha(\Gamma) ={-}1 +\alpha(W)$.
By applying Theorem~\ref{th:contrac_interc_system}, for each $\eps \in {]0, 1-\alpha(W)[}$ we have that the \HNN is strongly infinitesimally contracting with rate $|\alpha(\Gamma)+\eps|$. This concludes the proof.
\end{proof}
\section{Using Euclidean contractivity to solve quadratic optimization problems}
We now apply the previous results to propose a firing-rate neural network solving certain quadratic optimization problems with box constraints.
By utilizing Corollary~\ref{cor:ell2_contractivity_fr}, we ensure global exponential convergence of our dynamic, along with all the other properties of contracting systems.

Given $A = A^\top \succ 0$, an input $u \in \R^n$, and $\mu \leq \nu \in \R^n$ the \emph{quadratic optimization problem with box constraints} is
\beq\label{eq:quadratic_op_with_linear_constraints}
\min_{y \in \R^n} \Big(J_{A,u}(y) := \frac{1}{2} y^\top A y - u^\top y \Big), \quad \text{s.t. } \mu \leq y \leq \nu.
\eeq

Note that $J_{A,u}(\cdot)$ is strongly convex and the constraints are convex, thus~\eqref{eq:quadratic_op_with_linear_constraints} admits a unique global optimal solution.

We propose the following \FNN model to solve~\eqref{eq:quadratic_op_with_linear_constraints}.
Given a single-layered neural network of $n$ neurons, the state ${x \in \R^n}$ evolves according to
\beq
\label{eq:fr_constrained_qop_x_dot}
\dot{x} = -x +\sat{\mu, \nu}{(I_n-A)x{+}u},
\eeq
with output $y = x$. The activation function $\map{\sat{\mu, \nu}{\cdot}}{\R^n}{[\mu, \nu]:=[\mu_1,\nu_1]\times \dots \times [\mu_n,\nu_n]}$, illustrated in Figure~\ref{fig:plot_sat}, is defined as $(\sat{\mu, \nu}{x})_i= \sat{\mu_i, \nu_i}{x_i}$, where $\map{\sat{\mu_i, \nu_i}{\cdot}}{\R}{[\mu_i, \nu_i]}$ is
\[
\sat{\mu_i, \nu_i}{x_i} =
\left\{
\begin{array}{ll}
\mu_i & \textup{ if } x_i \leq \mu_i,\\
x_i     & \textup{ if } \mu_i < x_i < \nu_i,\\
\nu_i & \textup{ if } x_i \geq \nu_i.
\end{array}
\right.
\]
To simplify the notation, whenever it is clear from the context, we use the same symbol for both the scalar and vector forms of the saturation function.
\begin{figure}[!ht]
\scalebox{0.55}{\input{plot_sat_mu_nu.pgf}}
\centering
\caption{Saturation function $\sat{\mu, \nu}{\cdot}$ with $\mu = -1$ and $\nu = 3$.}
\label{fig:plot_sat}
\end{figure}
\begin{rem}
The function $\sat{\mu_i, \nu_i}{\cdot}$ satisfies Assumption~\eqref{ass:act_func_slope_restricted}.
Almost everywhere, its partial derivative is ${\partial \sat{a, b}{\cdot} \colon \R \setminus \{\, a, b \,\} \to \{0,1\}}$ defined by
\beq
\label{eq:der_sat_function}
\derp{(\sat{a, b}{z})}{z} =
\left\{
\begin{array}{cc}
0 & \textup{ if } z \notin {]a,b[}, \\
1 & \textup{ if } z \in {]a,b[}.
\end{array} 
\right.
\eeq
\end{rem}
Next, we use Corollary~\ref{cor:ell2_contractivity_fr} to give sufficient conditions for the strong infinitesimal contractivity of~\eqref{eq:fr_constrained_qop_x_dot}. Then, we show that the equilibrium of~\eqref{eq:fr_constrained_qop_x_dot} is the optimal solution of~\eqref{eq:quadratic_op_with_linear_constraints}.
\begin{lem}[Strong infinitesimal contractivity]
\label{th:contractivity_fr_constrained_qop}
Let ${A = A^\top \succ 0}$ in~\eqref{eq:fr_constrained_qop_x_dot}. The FNN~\eqref{eq:fr_constrained_qop_x_dot} is strongly infinitesimally contracting with rate $c > 0$ with respect to thee norm $\norm{\cdot}_{2,P}$, where
\begin{enumerate}[wide]
\item if $\subscr{\lambda}{min}(A) < 1$, then $c= \subscr{\lambda}{min}(A)$ and ${P = \ql{1{-}\subscr{\lambda}{min}(A)}}$, with $\ql{1{-}\subscr{\lambda}{min}(A)}$ defined in~\eqref{eq:ql};
\item if $\subscr{\lambda}{min}(A) = 1$, then for any $0<\eps < 1$, $c= 1-\eps>0$ and $P = \ql{\eps}$, with $\ql{\eps}$  defined in~\eqref{eq:ql};
\item if $\subscr{\lambda}{min}(A) > 1$, then $c=1$ and $P = (A - I_n)^{1/2}$.
\end{enumerate}
\end{lem}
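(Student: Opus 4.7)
The plan is to recognize that system~\eqref{eq:fr_constrained_qop_x_dot} fits the \FNN template~\eqref{eq:firing_rate_nn} with synaptic matrix $W := I_n - A$, input $u$ playing the role of $\ufr$, and activation function $\Phi = \sat{\mu,\nu}{\cdot}$. Once this identification is made, the lemma reduces to a direct invocation of Corollary~\ref{cor:ell2_contractivity_fr}, so the bulk of the argument is verifying that the two assumptions are met and translating the three eigenvalue regimes.

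First I would check Assumption~\ref{ass:symmetric_synaptic_matrix}: since $A = A^\top$, so is $W = I_n - A$. Second, I would check Assumption~\ref{ass:act_func_slope_restricted} for $\sat{\mu,\nu}{\cdot}$: componentwise, the saturation is Lipschitz with constant $1$, and the derivative formula~\eqref{eq:der_sat_function} shows it takes values in $\{0,1\}\subset[0,1]$ almost everywhere, so the slope restriction holds. Third, I would compute the spectral abscissa of $W$: since $W$ is symmetric with eigenvalues $1-\lambda_i(A)$, one has
\[
\alpha(W) = \subscr{\lambda}{max}(I_n - A) = 1 - \subscr{\lambda}{min}(A).
\]

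With these facts in hand, the three cases correspond exactly to the strong-contractivity cases of Corollary~\ref{cor:ell2_contractivity_fr}. In case (i), $\subscr{\lambda}{min}(A) < 1$ together with $A \succ 0$ gives $0 < \alpha(W) < 1$, so part~\ref{eq:contractivy_fr_0_1} of Corollary~\ref{cor:ell2_contractivity_fr} yields strong contractivity with rate $1-\alpha(W) = \subscr{\lambda}{min}(A)$ with respect to $\norm{\cdot}_{2,\ql{\alpha(W)}} = \norm{\cdot}_{2,\ql{1-\subscr{\lambda}{min}(A)}}$. In case (ii), $\subscr{\lambda}{min}(A)=1$ forces $\alpha(W)=0$, and the corresponding part of Corollary~\ref{cor:ell2_contractivity_fr} delivers rate $1-\eps$ in norm $\norm{\cdot}_{2,\ql{\eps}}$ for any $\eps\in(0,1)$. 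In case (iii), $\subscr{\lambda}{min}(A) > 1$ gives $\alpha(W) < 0$ and the final part of the corollary yields rate $1$ in norm $\norm{\cdot}_{2,(-W)^{1/2}} = \norm{\cdot}_{2,(A - I_n)^{1/2}}$.

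There is no real obstacle: the assumption $A \succ 0$ is needed precisely to rule out $\alpha(W) = 1$ (which would only give weak contractivity), and the three regimes of $\subscr{\lambda}{min}(A)$ relative to $1$ line up one-to-one with the three strong-contractivity cases of Corollary~\ref{cor:ell2_contractivity_fr}. The only mild bookkeeping is matching the weight matrices $P$ stated in the lemma to the canonical weights $\ql{\cdot}$ and $(-W)^{1/2}$ produced by the corollary, which is immediate from the definition of $W$.
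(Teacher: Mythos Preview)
Your proposal is correct and follows exactly the same approach as the paper: identify~\eqref{eq:fr_constrained_qop_x_dot} as an \FNN with $W = I_n - A$ and $\Phi = \sat{\mu,\nu}{\cdot}$, verify Assumptions~\ref{ass:symmetric_synaptic_matrix} and~\ref{ass:act_func_slope_restricted}, compute $\alpha(W) = 1 - \subscr{\lambda}{min}(A) < 1$, and invoke Corollary~\ref{cor:ell2_contractivity_fr} case by case. The paper's proof is a one-sentence version of precisely this argument.
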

\begin{proof}
The thesis follows by applying Corollary~\ref{cor:ell2_contractivity_fr} noticing that $A \succ 0$ implies $W = I_n{-}A \prec I_n$, thus $\alpha(W)=1{-}\subscr{\lambda}{min}(A) < 1$, and $\sat{\mu,\nu}{\cdot}$ satisfies Assumption~\ref{ass:act_func_slope_restricted}.
\end{proof}
An immediate consequence of Lemma~\ref{th:contractivity_fr_constrained_qop} is that~\eqref{eq:fr_constrained_qop_x_dot} admits a unique equilibrium point.
Next, we prove that this equilibrium point is  the optimal solution of~\eqref{eq:quadratic_op_with_linear_constraints}.
\begin{lem}
\label{lem:equivalence_fr_constrained_qop}
The vector $\xstar \in\R^n$ is the global minimum for~\eqref{eq:quadratic_op_with_linear_constraints} if and only if $\xstar$ is the equilibrium point of~\eqref{eq:fr_constrained_qop_x_dot}.
\end{lem}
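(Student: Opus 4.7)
The plan is to recognize the equilibrium equation of~\eqref{eq:fr_constrained_qop_x_dot} as the classical projected-gradient fixed-point condition for the constrained minimizer of~\eqref{eq:quadratic_op_with_linear_constraints}, so that the equivalence reduces to a standard fact from convex analysis.

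First, I would observe that the saturation function is exactly the Euclidean projection onto the box $[\mu,\nu] := [\mu_1,\nu_1]\times\cdots\times[\mu_n,\nu_n]$: for every $z\in\R^n$,
\[
\sat{\mu,\nu}{z} \;=\; \arg\min_{y\in[\mu,\nu]} \tfrac{1}{2}\norm{y-z}_2^2 \;=:\; \mathrm{proj}_{[\mu,\nu]}(z).
\]
This is immediate because both $\sat{\mu,\nu}{\cdot}$ and the squared Euclidean distance decouple across coordinates, and in each coordinate the scalar projection onto $[\mu_i,\nu_i]$ is precisely $\sat{\mu_i,\nu_i}{\cdot}$. Next, I would set $\dot x = 0$ in~\eqref{eq:fr_constrained_qop_x_dot} to obtain the equilibrium condition $\xstar = \sat{\mu,\nu}{(I_n - A)\xstar + u}$. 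Recognizing $\nabla J_{A,u}(x) = Ax - u$, this rewrites as
\[
\xstar \;=\; \mathrm{proj}_{[\mu,\nu]}\bigl(\xstar - \nabla J_{A,u}(\xstar)\bigr),
\]
i.e., the projected-gradient fixed-point equation with step size~$1$.

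Then I would invoke the classical characterization of constrained convex minimizers: since $J_{A,u}$ is strongly convex (as $A\succ 0$) and $[\mu,\nu]$ is closed and convex, $\xstar$ solves~\eqref{eq:quadratic_op_with_linear_constraints} if and only if the variational inequality $\scalar{\nabla J_{A,u}(\xstar),\, y-\xstar} \geq 0$ holds for all $y\in[\mu,\nu]$, and this in turn is equivalent to $\xstar = \mathrm{proj}_{[\mu,\nu]}(\xstar - \gamma\nabla J_{A,u}(\xstar))$ for every $\gamma>0$. Applying this with $\gamma = 1$ gives the desired equivalence.

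I do not anticipate a real obstacle: the argument is a one-line invocation of the projected-gradient fixed-point theorem combined with the identification of $\sat{\mu,\nu}{\cdot}$ with the box projection. As a fully self-contained fallback, the equivalence can be checked componentwise via the KKT conditions of~\eqref{eq:quadratic_op_with_linear_constraints}, distinguishing the three cases $\xstar_i \in {]\mu_i,\nu_i[}$, $\xstar_i = \mu_i$, and $\xstar_i = \nu_i$; in each case the scalar identity $\xstar_i = \sat{\mu_i,\nu_i}{\xstar_i - (A\xstar - u)_i}$ is easily seen to be equivalent to the scalar KKT condition on $(A\xstar - u)_i$, thereby completing both directions of the lemma.
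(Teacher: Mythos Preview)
Your proposal is correct. Your primary route differs from the paper's: the paper carries out exactly what you list as your ``fallback,'' namely a componentwise KKT check that distinguishes the three cases $\xstar_i=\mu_i$, $\mu_i<\xstar_i<\nu_i$, and $\xstar_i=\nu_i$ and verifies in each that the scalar equilibrium identity $\xstar_i=\sat{\mu_i,\nu_i}{\xstar_i-(A\xstar-u)_i}$ matches the KKT sign condition on $(A\xstar-u)_i$. You instead recognize $\sat{\mu,\nu}{\cdot}$ as the Euclidean projection onto the box and invoke the standard equivalence between the first-order variational inequality for a convex minimizer and the projected-gradient fixed-point equation. Your argument is shorter and more conceptual, making transparent that~\eqref{eq:fr_constrained_qop_x_dot} is precisely the projected-gradient flow for~\eqref{eq:quadratic_op_with_linear_constraints}; the paper's argument is entirely self-contained and avoids appealing to external convex-analysis results.
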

\begin{proof}
Let $\xstar \in \R^n$ be a global minimum for~\eqref{eq:quadratic_op_with_linear_constraints}, thus $\xstar \in [\mu, \nu]$. Then it follows from the KKT conditions that, for all $i \in \until{n}$,
\beq\label{eq:condition_min_pos_lasso}
\derp{J_{A,u}}{x_i}(\xstar) = (A\xstar)_i{-}u_i
\left\{
\begin{array}{ll}
\geq 0 & \text{ if } \xstar_i = \mu_i,\\
= 0    & \text{ if } \mu_i < \xstar_i < \nu_i, \\
\leq 0 & \text{ if } \xstar_i = \nu_i.
\end{array}
\right.
\eeq
Note that $\xstar$ is an equilibrium of~\eqref{eq:fr_constrained_qop_x_dot} if, for all $i$%$ \in \until{n}$
, we have
\begin{align}
\label{eq:fr_constrained_qop_eq}
-\xstar_i +\sat{\mu_i, \nu_i}{\xstar_i{-}(A\xstar)_i{+}u_i} = 0.
\end{align}
If $\xstar_i = \mu_i$, let $z^{\star} : = \left.(A\xstar)_i\right|_{\xstar_i = \mu_i}{-}u_i$. By definition of $\sat{\mu_i, \nu_i}{\cdot}$ it holds ${-}\mu_i +\sat{\mu_i, \nu_i}{\mu_i{-}z^{\star}} \geq0$. Moreover, from the KKT conditions~\eqref{eq:condition_min_pos_lasso}, and being  $\sat{\mu_i, \nu_i}{\cdot}$ monotonically non-decreasing, we get the reverse inequality.
Thus $\xstar_i = \mu_i$ verifies~\eqref{eq:fr_constrained_qop_eq}.
Similarly it can be proved that~\eqref{eq:fr_constrained_qop_eq} holds for $\mu_i < \xstar_i < \nu_i$, and $\xstar_i = \nu_i$.

Vice versa, let $\xstar \in \R^n$ be an equilibrium of~\eqref{eq:quadratic_op_with_linear_constraints}, i.e.,~\eqref{eq:fr_constrained_qop_eq} holds. If $\xstar_i \leq \mu_i$, then~\eqref{eq:fr_constrained_qop_eq} implies
$
\xstar_i  =\sat{\mu_i, \nu_i}{\mu_i{-}z^{\star}}.
$
By definition of $\sat{\mu_i, \nu_i}{\cdot}$ we get $\xstar_i \in [\mu_i,\nu_i]$, thus $\xstar_i = \mu_i$, and $\mu_i{-}z^{\star} \leq \mu_i$, which implies $z^{\star} \geq 0$. 
Similarly, if $\mu_i < \xstar_i < \nu_i$, then 
$z^{\star} = 0$, 
while if $\xstar_i \geq \nu_i$, then $\xstar_i = \nu_i$ and $z^{\star} \leq 0$.
This ends the proof since we have shown that the KKT conditions~\eqref{eq:condition_min_pos_lasso} hold for all $i$.%$ \in \until{n}$.
\end{proof}
\section{Conclusion}
We presented sharp conditions for strong and weak Euclidean contractivity of Hopfield and firing-rate neural networks with symmetric weights together with a number of general algebraic results. Specifically, we analyzed the Euclidean log-norm of matrix polytopes, proposing norms that are log-optimal for almost all matrices, and provided optimal and log-optimal norms for the product of symmetric matrices.
We considered networks with (possibly) non-smooth activation functions, which allows us to consider common activation functions such as ReLU and the soft thresholding function. Finally, to demonstrate the practical implications of our results, we proposed a \FNN to solve quadratic optimization problems with box constraints.

As future work, it would be useful to (i) extend our results to arbitrary
synaptic matrices (as opposed to only symmetric) and heterogeneous
dissipation matrices, (ii) establish higher-order contractivity
properties~\cite{CW-IK-MM:22} and consider stochastic models~\cite{ZA:22},
and (iii) apply these results to neuroscience and machine learning
problems. For example, we plan to study sparse reconstruction networks
(inspired by~\cite{CJR-DHJ-RGB-BAO:08}) and implicit learning models (e.g.,
see~\cite{MR-IM:20}).
\appendices
\section{Interconnected systems}
In this section, we briefly review the theory of contracting interconnected systems, that we used to prove Theorem~\ref{thm:Hgen_ell2}.
We refer to~\cite{FB:23-CTDS} for a recent and more detailed review.

Given $r$ positive integers $n_1,\dots, n_r$ such that $n_1 + \dots + n_r = n$, consider the decomposition $\R^n = \R^{n_1} \times \dots \times \R^{n_r}$, a local norm $\norm{\cdot}_i$ on $\R^{n_i}$, for each $i \in \{1,\dots, r\}$, with associated log-norm $\mu_i(\cdot)$. Consider the \emph{interconnection of $r$ dynamical systems}
\beq
\label{eq:interconnected_system}
\dot x_i = f_i(t,x_i,x_{-i}), \quad \forall i \in \{1,\dots,r\},
\eeq
where $x_i \in \R^{n_i}$, and $x_{-i} \in \R^{n-n_i}$ denote the vector $x$ without the component $x_{i}$.
We recall the following results that will be useful for our analysis.
\bt[Contractivity of interconnected system]
\label{th:contrac_interc_system}
Consider the interconnected system in~\eqref{eq:interconnected_system}. Assume \begin{enumerate}[label=\textup{($A$\arabic*)}, leftmargin=1.4 cm,noitemsep]
\item\label{eq:ass_A_1} (contractivity-at-each-node) at fixed $x_{-i}$ and $t$, each function $x_i \to f_i(t, x_i, x_{-i})$ is strongly infinitesimally contracting with rate $c_i$ with respect to $\norm{\cdot}_i$.
\item\label{eq:ass_A_2} (Lipschitz interconnections) at fixed $x_i$ and $t$, each function $x_{-i} \to f_i(t, x_i, x_{-i})$ is Lipschitz with Lipschitz constant $\gamma_{ij} \in \R_{\geq 0}$.
\end{enumerate}
Define the \emph{gain matrix}
\beq
\label{eq:def_gain_matrix}
\Gamma =
\begin{bmatrix}
- c_1 & \dots & \gamma_{1r}\\
\vdots & \dots & \vdots \\
\gamma_{r1} & \dots & - c_r 
\end{bmatrix}
\in \R^{r \times r}.
\eeq
If $\Gamma$ is Hurwitz, then the interconnected system is strongly infinitesimally contracting with respect to $\norm{\cdot}_{\eta}$ and with rate $|\alpha(\Gamma) + \eps|$, where $\eta \in \R^n_{>0}$, $\norm{\cdot}_{\eta}^2 := \sum_{i=1}^r \eta_i\norm{x_i}_i^2$, and $\epsilon >0$.
\et

\newcommand{\nondiff}{\Omega_\phi}
\newcommand{\nondiffBig}{\Omega_\Phi}
\section{Justification for Remark~\ref{rem:tight_ine_fnn}}
\label{apx:tight_ine_fnn}
\begin{lem}
Given the \FNN~\eqref{eq:firing_rate_nn} with symmetric (Assumption~\ref{ass:symmetric_synaptic_matrix}) and invertible synaptic matrix $W$, Lipschitz and slope restricted in $[0,1]$ (Assumption~\ref{ass:act_func_slope_restricted}) activation function $\phi$ satisfying $\inf_{x \in \R} \phi'(x) = 0$ and $\sup_{x \in \R} \phi'(x) = 1$,
\begin{enumerate}
\item \label{fact:nnell2:osLFR_alpha(W)>0_apx}
if $\lmax >0$, then $$\displaystyle \osLip_{2,\ql{\lmax}}(\ffr) = -1{+}\lmax,$$ with $\ql{\lmax}\in\R^{n\times n}$ defined in~\eqref{eq:ql};
\item \label{fact:nnell2:osLFR_alpha(W)<0_apx}
if $\lmax < 0$, then $$\displaystyle \osLip_{2,(-W)^{1/2}}(\ffr) = -1.$$
\end{enumerate}
\end{lem}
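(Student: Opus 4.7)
The upper bounds $\osLip_{2,\ql{\lmax}}(\ffr) \leq -1+\lmax$ and $\osLip_{2,(-W)^{1/2}}(\ffr) \leq -1$ are already established in Theorem~\ref{thm:ell2_osLip_fr}, so only the reverse inequalities require proof. By Theorem~\ref{th:equivalence_loc_lip_func}, the one-sided Lipschitz constant equals the essential supremum of the log-norm of the Jacobian, so it suffices to show that, for every $\eps>0$, the set of $x \in \R^n$ at which $\wlognorm{2}{\ql{\lmax}}{D\ffr(x)} > -1+\lmax -\eps$ (respectively $\wlognorm{2}{(-W)^{1/2}}{D\ffr(x)} > -1-\eps$) has positive Lebesgue measure.

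The key lemma is the following level-set claim: for every $\delta>0$, the sets
\[
E^+_\delta := \setdef{z \in \R}{\phi'(z) \text{ exists and } \phi'(z) > 1-\delta}, \quad E^-_\delta := \setdef{z \in \R}{\phi'(z) \text{ exists and } \phi'(z) < \delta}
\]
both have positive one-dimensional Lebesgue measure. Indeed, if, say, $E^+_\delta$ had measure zero, then $\phi'(z) \leq 1-\delta$ almost everywhere, hence by the fundamental theorem of calculus for Lipschitz functions $\phi(y)-\phi(x)\leq (1-\delta)(y-x)$ for all $x<y$, so every existing derivative of $\phi$ would be at most $1-\delta$, contradicting $\sup_{x\in\R}\phi'(x)=1$. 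The argument for $E^-_\delta$ is symmetric, using $\phi'\geq 0$ and $\inf \phi' = 0$. Since $W$ is invertible, the preimage $\setdef{x \in \R^n}{(Wx+u)_i \in E^{\pm}_\delta \text{ for all } i}=W^{-1}\bigl((E^\pm_\delta)^n-u\bigr)$ has positive $n$-dimensional Lebesgue measure.

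For part~\ref{fact:nnell2:osLFR_alpha(W)>0_apx} I pick $x$ in the positive-measure preimage associated with $E^+_\delta$. Then $D\Phi(Wx+u)=[d(x)]$ with every $d(x)_i \in (1-\delta,1]$, and the Jacobian reads $D\ffr(x)= -I_n + [d(x)]W$. Writing $[d(x)] = I_n - [e(x)]$ with $\norm{[e(x)]}_2 \leq \delta$, a standard perturbation bound on log-norms gives
\[
\wlognorm{2}{\ql{\lmax}}{-I_n+[d(x)]W} \geq \wlognorm{2}{\ql{\lmax}}{-I_n+W} - \bignorm{[e(x)]W}_{2,\ql{\lmax}} \geq -1+\lmax - C\delta,
\]
where $C=\norm{\ql{\lmax}}_2\norm{\ql{\lmax}^{-1}}_2\norm{W}_2$ and the first term equals $-1+\lmax$ by the log-optimality of $\norm{\cdot}_{2,\ql{\lmax}}$ on $\PF$ (Theorem~\ref{thm:prop_lognorm}, part~\ref{fact:nnell2:lognorm_alpha(W)>0}). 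Choosing $\delta<\eps/C$ establishes the lower bound.

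For part~\ref{fact:nnell2:osLFR_alpha(W)<0_apx} I pick $x$ in the positive-measure preimage associated with $E^-_\delta$. Then $D\Phi(Wx+u)=[d(x)]$ with $0\leq d(x)_i<\delta$. A direct calculation using $W\prec 0$ symmetric yields
\[
\wlognorm{2}{(-W)^{1/2}}{-I_n+[d(x)]W} = -1 + \alpha(W[d(x)]),
\]
and by Lemma~\ref{lem:prod_sym_matrices} applied with $S=-[d(x)]\preceq 0$ and $Q=-W\succ 0$, the spectrum of $W[d(x)]$ is real and nonpositive, while $\abs{\alpha(W[d(x)])}\leq \norm{W}_2\delta\to 0$ as $\delta\to 0$. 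Hence the log-norm on this positive-measure set is at least $-1-\norm{W}_2\delta$, which tends to $-1$ as $\delta\downarrow 0$, proving the desired lower bound.

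\textbf{Main obstacle.} The nontrivial step is the level-set claim on $E^{\pm}_\delta$: $\phi'$ may fail to be continuous where it exists, so one cannot simply invoke intermediate-value reasoning; the Lipschitz representation $\phi(y)-\phi(x)=\int_x^y \phi'$ and the slope restriction are essential to convert the pointwise hypotheses $\sup\phi'=1$, $\inf\phi'=0$ into positive-measure statements. The remainder is a perturbation argument in log-norms plus an application of Lemma~\ref{lem:prod_sym_matrices}.
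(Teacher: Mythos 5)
Your proof is correct, but it takes a genuinely different route from the paper's. The paper proves the reverse inequality by noting that, since $\phi'$ is bounded with $\inf_{x}\phi'(x)=0$ and $\sup_{x}\phi'(x)=1$, the closure of $\Img(\phi')$ contains $\{0,1\}$; using invertibility of $W$ to reparametrize the argument of $D\Phi$, it rewrites $\sup_x\lognorm{}{D\Phi(Wx+u)W}$ as a maximum of $\lognorm{}{[d]W}$ over $d$ with entries in $\overline{\Img(\phi')}$, lower-bounds this by the maximum over the vertices $\{0,1\}^n$, and then invokes convexity of the log-norm (maximum of a convex function over a polytope is attained at a vertex) to identify it with $\max_{d\in[0,1]^n}\lognorm{}{[d]W}$, which Theorem~\ref{thm:prop_lognorm} evaluates exactly. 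You instead work at the level of essential suprema: your level-set lemma (via the fundamental theorem of calculus for Lipschitz functions) shows that slopes within $\delta$ of the extreme values occur on sets of positive measure, invertibility of $W$ transfers this to a positive-measure set of states $x$, and a log-norm perturbation bound $\mu(A+B)\geq\mu(A)-\norm{B}$ (respectively the bound $\mu\geq\alpha$ together with Lemma~\ref{lem:prod_sym_matrices} when $\lmax<0$) compares the Jacobian on that set with the extreme vertex $-I_n+W$ (resp.\ $-I_n$). What your approach buys is that it makes fully explicit the measure-theoretic step needed to pass from pointwise information about $\phi'$ to the one-sided Lipschitz constant, which by Theorem~\ref{th:equivalence_loc_lip_func} is an almost-everywhere (essential-supremum) quantity; the paper's pointwise-supremum chain leaves this conversion implicit, while being shorter and free of perturbation constants. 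One small point to tighten in your write-up: the identity $\wlognorm{2}{\ql{\lmax}}{-I_n+W}=-1+\lmax$ is not literally ``log-optimality on $\PF$'' (that is a statement about the maximum over the whole polytope); it follows by combining the vertex bound $\wlognorm{2}{\ql{\lmax}}{W}\leq\lmax$ from Theorem~\ref{thm:prop_lognorm}\ref{fact:nnell2:lognorm_alpha(W)>0} (take $d=\1_n$) with the general inequality $\alpha(W)\leq\wlognorm{2}{\ql{\lmax}}{W}$ and the translation property.
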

\begin{proof}
The proof of both parts follows by applying Theorem~\eqref{thm:ell2_osLip_fr} and noticing that under the above assumptions for any log-norm $\mu$ it holds the reverse inequality
\beq
\label{eq:rem}
\lognorm{}{D{\ffr}(x)} \geq -1{+}\lmax.
\eeq
To prove~\eqref{eq:rem}, let $\map{h}{\R \setminus \nondiff}{[0,1]}$ be the function defined by $h(x) = \phi'(x)$ where $\nondiff$ is the measure zero set of points in $\R$ where $\phi$ is not differentiable. It is well-known that for any closed and bounded set $S \subset \R$, $S \supseteq \{\inf(S), \sup(S)\}$. Then, since $h$ is bounded, the closure of $\Img(h)$ satisfies 
\begin{equation}\label{eq:closure-containment}
    \overline{\Img(h)} \supseteq \Big\{\inf_{x \in \R \setminus \nondiff} \phi'(x), \sup_{x \in \R \setminus \nondiff} \phi'(x)\Big\} = \{0,1\}.
\end{equation}
Letting $\nondiffBig$ be the measure zero points in $\R^n$ where $\Phi$ is not differentiable, we compute
\begin{align}
\sup_{x \in \R^n\setminus \nondiffBig} \lognorm{}{D\Phi(Wx+u)W} &= \sup_{x \in \R^n \setminus \nondiffBig} \lognorm{}{D\Phi(x)W}\label{eq:invertibility}\\
&=\sup\setdef{\lognorm{}{[d]W}}{d_i \in \Img(h), \forall i} \\
&=\max\setdef{\lognorm{}{[d]W}}{d_i \in \overline{\Img(h)}, \forall i} \\
&\geq \max_{d \in \{0,1\}^n} \lognorm{}{[d]W}\label{eq:max-closure}\\
& = \max_{d \in [0,1]^n} \lognorm{}{[d]W}.\label{eq:mu-convexity}
\end{align}
We justify the above (in)equalities as follows. Equality~\eqref{eq:invertibility} holds because $W$ is invertible. Inequality~\eqref{eq:max-closure} holds because of the condition~\eqref{eq:closure-containment}. Finally, equality~\eqref{eq:mu-convexity} follows because $\mu$ is a convex function of its argument and the maximum value of a convex function over a polytope occurs at one of its vertices. 

In particular, for the respective choice of norm in parts \ref{fact:nnell2:osLFR_alpha(W)>0_apx} and~\ref{fact:nnell2:osLFR_alpha(W)<0_apx}, the result is proved in view of Theorem~\ref{thm:prop_lognorm} and the translation property for log-norms.
\end{proof}
\bibliographystyle{plainurl+isbn}
\bibliography{alias,Main,FB}
\end{document}